\UseRawInputEncoding



\documentclass[12pt]{amsart}
\usepackage{amsfonts}
\usepackage{amsmath}
\usepackage{amssymb,latexsym}
\usepackage[mathcal]{eucal}
\usepackage[pdftex,bookmarks,colorlinks,breaklinks]{hyperref}

\input xy
\xyoption{all}

\oddsidemargin 0.1875 in \evensidemargin 0.1875in
\textwidth 6 in 
\textheight 230mm \voffset=-4mm


\newcommand{\Ccal}{\mathcal{C}}

\newcommand{\Ecal}{\mathcal{E}}

\newcommand{\Gcal}{\mathcal{G}}

\newcommand{\Kcal}{\mathcal{K}}

\newcommand{\Pcal}{\mathcal{P}}
\newcommand{\Qcal}{\mathcal{Q}}
\newcommand{\Rcal}{\mathcal{R}}
\newcommand{\Scal}{\mathcal{S}}

\newcommand{\Ucal}{\mathcal{U}}

\newcommand{\Xcal}{\mathcal{X}}
\newcommand{\Ycal}{\mathcal{Y}}
\newcommand{\Zcal}{\mathcal{Z}}

\newcommand{\ch}{\mathbf{1}}

\newcommand{\Z}{\mathbb{Z}}

\newcommand{\N}{\mathbb{N}}

\newcommand{\Ab}{\mathbf{A}}
\newcommand{\Bb}{\mathbf{B}}

\newcommand{\Wb}{\mathbf{W}}
\newcommand{\Xb}{\mathbf{X}}
\newcommand{\Yb}{\mathbf{Y}}
\newcommand{\Zb}{\mathbf{Z}}

\newcommand{\al}{\alpha}

\newcommand{\del}{\delta}

\newcommand{\ep}{\epsilon}
\newcommand{\sig}{\sigma}

\newcommand{\la}{\lambda}

\newcommand{\om}{\omega}

\newcommand{\br}{\vspace{3 mm}}

\newcommand{\tri}{\bigtriangleup}
\newcommand{\rest}{\upharpoonright}

\newcommand{\id}{{\rm{id}}}
\newcommand{\Id}{{\rm{Id}}}

\newcommand{\dist}{{\rm{dist}}}




\swapnumbers
\theoremstyle{plain}
\newtheorem{thm}{Theorem}[section]

\newtheorem{lem}[thm]{Lemma}
\newtheorem{prop}[thm]{Proposition}

\theoremstyle{definition}
\newtheorem{defn}[thm]{Definition}

\newtheorem{rmk}[thm]{Remark}

\newtheorem{prob}[thm]{Problem}





\begin{document}


\title[An ergodic system is dominant exactly when it has positive entropy]
{An ergodic system is dominant exactly when it has positive entropy}

\author[Tim Austin]{Tim Austin}
\address{Department of Mathematics, University of California, Los Angeles, Los Angeles, CA 90095-1555, USA} \email{tim@math.ucla.edu}

\author[Eli Glasner]{Eli Glasner}
\address{Department of Mathematics,
Tel-Aviv University, Ramat Aviv, Israel}
\email{glasner@math.tau.ac.il}

\author[Jean-Paul Thouvenot]
{Jean-Paul Thouvenot}
\address{Laboratoire de Probabilit\'es, Universit\'e 
Statistique et Mod\'{e}lisation, Sorbonne Universit\'{e},
4 Place Jussieu, 75252 Paris Cedex 05, France}
\email{jean-paul.thouvenot@upmc.fr}

\author[Benjamin Weiss]{Benjamin Weiss}
\address{Mathematics Institute, Hebrew University of Jerusalem,
Jerusalem, Israel}
\email{weiss@math.huji.ac.il}

%
%
%
%
%
%


\setcounter{secnumdepth}{2}



\setcounter{section}{0}


 
%

\begin{abstract}
An ergodic dynamical system $\Xb$ is called dominant if it is isomorphic to a 
generic extension of itself. It was shown in \cite{GTW} that Bernoulli systems with finite entropy
are dominant. In this work we show first that every ergodic system with positive entropy  
 is dominant, and then that if $\Xb$ has zero entropy then it is not dominant.
\end{abstract}

\keywords{dominant systems, generic properties, Bernoulli systems, relative Bernoulli,
very weak Bernoulli}

\thanks{{
\em 2010 MSC2010:
37A25, 37A05, 37A15, 37A20}}

\begin{date}
{December 7, 2021}
\end{date}

\maketitle


\tableofcontents
\setcounter{secnumdepth}{2}


\setcounter{section}{0}


\section*{Introduction}
We say that an ergodic system $\Xb = (X, \Xcal, \mu, T)$ is {\em dominant} if a generic extension
$\hat{T}$ of $T$ is isomorphic to $T$.   We obtain the surprising result that every ergodic positive entropy
system of an amenable group has the property that its generic extension is isomorphic
to it. For $\Z$ systems we show that conversely, when an ergodic system has zero
entropy then it is not dominant. Our first result  for $\Z$ actions follows from an extension of a result from 
\cite{GTW} according to which a generic extension of a Bernoulli system is Bernoulli with the same entropy (and hence is isomorphic to it by Ornstein's fundamental result) to the relative situation - together with 
Austin's weak Pinsker theorem \cite{Au}.
 The extension to all countable amenable groups relies on the results in 
\cite{OW-80}, \cite{R-W} and \cite{D-P}. For the result that zero
 entropy is not dominant for $\Z$ actions we use an idea from the slow entropy developed in \cite{K-T}.

\br

To make the definition of dominance more precise,
as in \cite{G-W} and \cite{GTW}, we 
present a convenient way of parametrising the space of extensions of $T$ as follows:
Let $\Xb = (X, \mathcal{X},\mu,T)$ be an ergodic system.
We will assume throughout this work (excepting the last section, where we will
comment about the infinite entropy case) that it is infinite and has finite entropy,
which for convenience we assume is equal $1$. 
Let $\Rcal \subset \Xcal$ be a finite generating partition.
Let $\Scal$ be the collection of Rokhlin cocycles with values in the Polish group
of measure preserving automorphisms of
the unit interval MPT$(I, \Ccal, \la)$, where 
$\la$ is the normalized Lebesgue measure 
and $\Ccal$ is the Borel $\sig$-algebra on $I = [0,1]$.
Thus an element $S \in \Scal$ is a measurable map $x \mapsto S_x \in $ MPT$(I, \la)$,
and we associate to it the {\em skew product transformation}
$$
\hat{S}(x,u) = (Tx, S_x u),\quad  (x \in X, u \in I),
$$ 
on the measure space $(X \times I, \Xcal \times \Ccal, \mu \times \la)$.

We recall that, by Rokhlin's theorem,  every ergodic extension $\Yb \to \Xb$ either has this form
or it is $n$ to $1$ a.e for some $n \in \N$ (see e.g. \cite[Theorem 3.18]{G}).
Thus the collection $\Scal$ parametrises the ergodic extensions of $\Xb$ with infinite fibers.
This defines a Polish topology on $\Scal$ which is inherited from 
the Polish group MPT$(X \times I, \mu \times \la)$ of all the measure preserving transformations.

In \cite{GTW} we have shown that for a fixed ergodic finite entropy $T$ with property $\Ab$, 
a generic extension $\hat{T}$ of $T$ also has the property $\Ab$,
where $\Ab$ stands for each of the following properties:
(i) having the same entropy as $T$, (ii) Bernoulli, (iii)  K, and (iv) loosely Bernoulli.

Now with these notations at hand the definition above becomes:

\begin{defn}
An ergodic system $\Xb = (X, \mathcal{X},\mu,T)$ is {\em dominant} if 
there is a dense $G_\del$ subset $\Scal_0 \subset \Scal$ such that for each
$S \in \Scal_0$ we have $\hat{S} \cong T$.
\end{defn}

From  \cite[Theorems 4.1 and 5.1]{GTW}, if $\Bb$ is a Bernoulli system with 
finite entropy, then its generic extension is again Bernoulli having the same entropy.
By Ornstein's theorem  \cite{O} such an extension is isomorphic to $\Bb$.
This proves the following.

\begin{prop}
Every Bernoulli system with finite entropy is dominant.
\end{prop}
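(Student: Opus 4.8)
The plan is to read off the result from the two genericity statements recalled above, together with Ornstein's isomorphism theorem; the argument is short because the substantial work is already contained in \cite[Theorems 4.1 and 5.1]{GTW}. Fix a Bernoulli system $\Bb = (X,\Xcal,\mu,T)$ of finite entropy, normalized as in the set-up so that $h(\Bb) = 1$, and recall that its infinite-fiber ergodic extensions are parametrized by the Polish space $\Scal$ of Rokhlin cocycles, with each $S \in \Scal$ giving the skew product $\hat{S}$.

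First I would invoke the genericity of properties (i) and (ii) for a Bernoulli base. By \cite[Theorems 4.1 and 5.1]{GTW} there is a dense $G_\del$ set $\Scal_1 \subset \Scal$ such that $\hat{S}$ is Bernoulli for every $S \in \Scal_1$, and a dense $G_\del$ set $\Scal_2 \subset \Scal$ such that $\hat{S}$ has the same entropy as $T$ for every $S \in \Scal_2$. Put $\Scal_0 = \Scal_1 \cap \Scal_2$. Since $\Scal$ is Polish, hence a Baire space, and a finite intersection of dense $G_\del$ sets is again a dense $G_\del$, the set $\Scal_0$ is a dense $G_\del$ subset of $\Scal$. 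For every $S \in \Scal_0$ the extension $\hat{S}$ is therefore simultaneously Bernoulli and of entropy equal to $h(\Bb)$.

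Finally I would apply Ornstein's theorem \cite{O}: two Bernoulli systems of equal finite entropy are isomorphic. Thus $\hat{S} \cong \Bb = T$ for every $S \in \Scal_0$, which is precisely the dominance of $\Bb$. The only point that needs care --- and the reason both genericity statements are used, not merely the one asserting that the generic extension is Bernoulli --- is that any extension of $\Bb$ has entropy at least $h(\Bb)$, so a priori the generic extension could be a Bernoulli system of strictly larger entropy and hence fail to be isomorphic to $\Bb$. Pinning the entropy down to $h(\Bb)$ via property (i) removes this possibility, after which Ornstein's theorem closes the argument with no further estimates required. I do not anticipate a genuine obstacle here; the content lies entirely in the cited theorems, and the present statement is the clean packaging of their conjunction.
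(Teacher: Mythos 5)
Your proof is correct and follows essentially the same route as the paper: intersect the dense $G_\delta$ sets from \cite[Theorems 4.1 and 5.1]{GTW} (entropy preservation and Bernoullicity of the generic extension) and conclude by Ornstein's isomorphism theorem. Your added remark on why the entropy statement is genuinely needed is a sensible clarification but does not change the argument.
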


We recall that an ergodic system $\Xb$  is {\em coalescent} is every endomorphism 
$E$ of $\Xb$ is an automorphism. 
Note that when an extension $\hat{S}$ as above with $ \hat{S} \cong T$ exists then 
the the system $\Xb$ is not coalescent. In fact, if $\pi : \hat{S} \to T$ is the 
(infinite to one) extension, and $\theta : T \to \hat{S}$ is an isomorphism then
$E = \pi \circ \theta$ is an endomorphism of $\Xb$ which is not an automorphism.
Thus we have the following:

\begin{prop}\label{dnc}
A dominant system is not coalescent.
\end{prop}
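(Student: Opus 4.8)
The plan is to use dominance to manufacture a single infinite-to-one self-factor of $\Xb$ and to read off from it a non-invertible endomorphism. First I would note that, by the definition of dominance, $\Scal_0$ is a dense $G_\del$ subset of the Polish space $\Scal$; since a nonempty Polish space is a Baire space, $\Scal_0$ is nonempty, and so I may fix an $S \in \Scal_0$ with $\hat S \cong T$.

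Next I would make the relevant maps explicit. The skew product $\hat S(x,u)=(Tx,S_x u)$ carries the coordinate projection $\pi(x,u)=x$, which is a factor map of $\hat S$ onto $\Xb$ whose fibre over every point is a copy of the interval $(I,\la)$; in particular $\pi$ is infinite-to-one and admits no measurable inverse. Writing $\theta:\Xb\to\hat S$ for an isomorphism witnessing $\hat S\cong T$, I would set $E=\pi\circ\theta:\Xb\to\Xb$. Because $\theta$ intertwines $T$ with $\hat S$ and $\pi$ intertwines $\hat S$ with $T$, a direct computation gives $E\circ T=T\circ E$, and $E$ is measure preserving as a composite of two measure-preserving maps, so $E$ is an endomorphism of $\Xb$.

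Finally I would observe that $E$ is not an automorphism: $\theta$ is invertible whereas $\pi$ is not, so $E$ inherits the non-invertibility of $\pi$. Hence $\Xb$ admits an endomorphism that fails to be an automorphism and is therefore not coalescent. The only point needing a little care is the non-invertibility of $\pi$, which rests on the fact built into the parametrisation of $\Scal$ that each $\hat S$ is an extension of $\Xb$ with infinite (interval) fibres; granting this, the remainder is the formal manipulation already sketched in the paragraph preceding the statement.
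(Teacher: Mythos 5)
Your proof is correct and follows essentially the same route as the paper's: dominance yields an extension $\hat S \cong T$, and composing the infinite-to-one projection $\pi$ with an isomorphism $\theta : \Xb \to \hat S$ produces an endomorphism $E = \pi \circ \theta$ that is not an automorphism. The additional details you supply (nonemptiness of $\Scal_0$ via the Baire property and the explicit intertwining check) are accurate refinements of the argument sketched in the paragraph preceding the proposition.
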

 
Hahn and Parry  \cite{H-P}  showed that totally ergodic automorphisms with 
quasi-discrete spectrum are coalescent.
In \cite{N} Dan Newton says:
\begin{quote}
A question put to me by Parry in conversation is the following: if $T$ has
positive entropy does it follow that $T$ is not coalescent ?
\end{quote}

Using theorems of Ornstein \cite{O} and Austin \cite{Au}, 
and results from \cite{GTW} we can now prove the following theorem.

\begin{thm}
An ergodic system with positive entropy is not coalescent.
\end{thm}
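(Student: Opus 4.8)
The plan is to reduce to the Bernoulli case already settled by the preceding propositions, extracting a Bernoulli \emph{direct factor} by means of Austin's weak Pinsker theorem. Let $\Xb = (X,\Xcal,\mu,T)$ be ergodic with positive finite entropy. First I would invoke \cite{Au}: for every $\ep > 0$ there is an isomorphism $\Xb \cong \Bb \times \Cb$, where $\Bb$ is Bernoulli and $\Cb$ has entropy less than $\ep$. Choosing $\ep$ strictly smaller than the entropy of $\Xb$ forces $\Bb$ to be a nontrivial Bernoulli system of positive finite entropy.

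Next I would produce a non-invertible endomorphism of the Bernoulli factor $\Bb$. By the proposition above, every finite-entropy Bernoulli system is dominant; concretely, for a generic Rokhlin cocycle $S \in \Scal$ the infinite-to-one skew-product extension $\hat S \to \Bb$ satisfies $\hat S \cong \Bb$, which is where \cite[Theorems 4.1 and 5.1]{GTW} together with Ornstein's isomorphism theorem \cite{O} enter. Fixing one such $S$, let $\pi \colon \hat S \to \Bb$ be the infinite-to-one factor map and $\theta \colon \Bb \to \hat S$ an isomorphism. Then $E := \pi \circ \theta$ is an endomorphism of $\Bb$ that is surjective but infinite-to-one, hence not invertible, exactly as in the proof of Proposition \ref{dnc}.

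Finally I would transport this back to $\Xb$ through the product structure. The map $E \times \id_{\Cb}$ commutes with the product transformation $T_{\Bb} \times T_{\Cb}$ and preserves the product measure, so it is an endomorphism of $\Bb \times \Cb$; since $E$ is infinite-to-one on a set of full measure, so is $E \times \id_{\Cb}$, whence it is not invertible. Pulling this back through the isomorphism $\Xb \cong \Bb \times \Cb$ yields a non-invertible endomorphism of $\Xb$, so $\Xb$ is not coalescent.

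I expect the only substantive input to be Austin's weak Pinsker theorem, which supplies the Bernoulli \emph{direct factor}: a genuine splitting, not merely a Bernoulli factor in the sense of Sinai. Everything after that, namely dominance of the Bernoulli piece and the product-with-identity construction, is routine. The one point to verify carefully is that $E \times \id_{\Cb}$ fails to be a.e.\ invertible, but this is immediate from the infinite-to-one character of $E$, which in turn reflects that the fibers of $\pi$ are infinite.
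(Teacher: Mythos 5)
Your proof is correct, and its skeleton coincides with the paper's: Austin's weak Pinsker theorem \cite{Au} provides the direct-product splitting $\Xb \cong \Bb \times \Cb$ with $\Bb$ a nontrivial Bernoulli system of positive finite entropy, and the product-with-identity construction $E \times \id$ (Newton's \cite[Proposition 1]{N}) transports a non-invertible endomorphism of $\Bb$ to one of $\Xb$. Where you genuinely differ is in why the Bernoulli factor fails to be coalescent. You obtain this from dominance: by the proposition that finite-entropy Bernoulli systems are dominant (\cite[Theorems 4.1 and 5.1]{GTW} together with Ornstein's theorem \cite{O}), a generic infinite-to-one skew product $\hat{S}$ over $\Bb$ is isomorphic to $\Bb$, and then $E = \pi \circ \theta$ is an endomorphism which is not an automorphism, exactly as in Proposition \ref{dnc}. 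The paper instead dispenses with the genericity machinery at this step: it takes a single proper isometric extension $\Bb' \to \Bb$ which is again Bernoulli (such extensions exist by Adler--Shields \cite{A-S}, or by Rudolph's theorem \cite{Ru-79} that weakly mixing isometric extensions of Bernoulli systems are Bernoulli; isometric extensions add no entropy), so Ornstein's theorem gives $\Bb' \cong \Bb$, and composing this isomorphism with the extension map yields the non-invertible endomorphism. What each approach buys: yours needs no input beyond results already stated earlier in the paper, so the theorem becomes a formal corollary of the two preceding propositions; the paper's is lighter and more self-contained, resting on one explicit Bernoulli extension rather than on Baire category in the space of Rokhlin cocycles. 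Both arguments hinge on Ornstein's theorem and, as you rightly stress, on the weak Pinsker splitting being a genuine direct product rather than merely a Sinai factor.
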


\begin{proof}
We first observe that a Bernoulli system is never coalescent 
(if $\Bb$ is Bernoulli and $\Bb' \to \Bb$ is an isometric extension which is 
again Bernoulli then, by Ornstein's theorem, $\Bb' \cong \Bb$).
Now let $\Xb = (X, \mathcal{X},\mu,T)$ be an ergodic system with positive entropy.
By Austin's weak Pinsker theorem \cite{Au} we can write $\Xb$ as a product system
$\Bb \times \Zb$ with $\Bb$ a Bernoulli system of finite entropy. 
Finally, as noted in \cite[Proposition 1]{N}
if $T = T_1 \times T_2$, where $T_1$ is not coalescent, then $T$ is not coalescent.
In fact, given an endomorphism $E$ of $T_1$ which
is not an automorphism, the map $E \times \Id$, where $\Id$ denotes the identity automorphism on the second coordinate, is an endomorphism of $T$ which is not an automorphism.
Applying this observation to $\Xb = \Bb \times \Zb$ we obtain our claim.
\end{proof}

These results suggest the following question:
is every ergodic system of zero entropy not dominant ?
At least generically we immediately see that the answer is affirmative.
As was shown in \cite{N} the set of coalescent automorphisms in MPT$(I, \la)$
is comeager. Thus by Proposition \ref{dnc} we conclude that:
the set of non-dominant automorphisms is comeager in MPT$(I, \la)$, hence also
in the dense $G_\del$ subset of MPT$(I, \la)$ comprising the zero entropy automorphisms.
However, as we will show in section \ref{0entropy} using a slow entropy argument,
the answer is affirmative for every ergodic system with zero entropy.

\br

\section{Relative Bernoulli}\label{Sec-rB}

\begin{defn}\label{d:rB}
Let $\Xb = (X, \mathcal{X},\mu,T)$ be an ergodic system and $\Xcal_0 \subset \Xcal$ a $T$-invariant
$\sig$-subalgebra. Let $\Xb_0 = (X_0, \mathcal{X}_0,\mu_0,T_0)$ be the corresponding factor system
and let $\pi : \Xb \to \Xb_0$ denote the factor map.
We say that $\Xb$ is {\em relatively Bernoulli} over $\Xb_0$ if there is a 
$T$-invariant $\sig$-algebra $\Xcal_1 \subset \Xcal$ independent of
$\Xcal_0$ such that $\Xcal = \Xcal_0 \vee \Xcal_1$, 
and there is a $\Xcal_1$-generating finite partition $\Kcal \subset \Xcal_1$
such that the partitions  $\{T^i \Kcal\}_{i \in \Z}$ are independent; in other words,
the corresponding system $\Xb_1 = (X_1, \mathcal{X}_1,\mu_1,T_1)$ is Bernoulli
and $\Xb \cong \Xb_0 \times \Xb_1$.
\end{defn}

If $\Rcal_0$ is a finite generating partition for $\Xcal_0$ and 
$\Rcal$ is a finite generating partition for $\Xcal$ then J.-P.  Thouvenot showed 
 that there is a condition called relatively weak Bernoulli, which is equivalent to the extension being
 relatively Bernoulli, see \cite{Th-75b} and also \cite{Ki}.  This condition is as follows:
 
 \begin{defn}
 The partition $(\Rcal,T)$ is {\em relatively Bernoulli} over $(\Rcal_0,T)$ if for every 
 $\ep >0$ there is $N$ such that for a collection $\Gcal$ of atoms $A$ of the partition
 $\vee_{i=-\infty}^{-1} T^{-i} \Rcal$,
 and  a collection $\Gcal_0$ of atoms $B$ of the partition
 $\vee_{i=-\infty}^{-\infty} T^{-i} \Rcal_0$, 
we have 
\begin{subequations}\label{*}
\begin{equation}\label{*a}
  \mu\left(\bigcup \{ A \cap B : A \in \Gcal, B \in \Gcal_0\}\right) > 1 - \ep,
 \end{equation}
 \begin{equation} \label{*b}
 \bar{d}_N (
 \dist (\vee_{i=0}^{N-1} T^{-i} \Rcal \rest A \cap B), 
 \dist (\vee_{i=0}^{N-1} T^{-i} \Rcal \rest  B))  < \ep, 
 \end{equation}
\end{subequations}
for all such $A$ and $B$.
 \end{defn}
 
Since $\vee_{i=-k}^{-1} T^{-i} \Rcal \nearrow \vee_{i=-\infty}^{-1} T^{-i} \Rcal$ and
$\vee_{i=-k}^{k} T^{-i} \Rcal_0 \nearrow \vee_{i=-\infty}^{\infty} T^{-i} \Rcal_0$, this can be 
formulated in finite terms as :
for every $\ep >0$ there is $N$ and
$\exists k_0$ such that for all $k > k_0$ there is a collection $\Gcal$ of atoms $A$ of 
 $\vee_{i=-k}^{-1} T^{-i} \Rcal$ 
 and a collection $\Gcal_0$ of atoms $B$ of 
$\vee_{i=-k}^{k} T^{-i} \Rcal_0$ such that
\begin{subequations}\label{**}
\begin{equation}\label{**a}
\mu\left(\bigcup \{ A \cap B : A \in \Gcal, B \in \Gcal_0\}\right) > 1 - \ep,
 \end{equation}
 \begin{equation} \label{**b}
 \bar{d}_N (
 \dist (\vee_{i=0}^{N-1} T^{-i} \Rcal \rest A \cap B), 
 \dist (\vee_{i=0}^{N-1} T^{-i} \Rcal \rest  B))  < \ep,
 \end{equation}
\end{subequations}
for all such $A$ and $B$.

 One last change --- instead of (\ref{**b}) we can also require that for $A, A' \in \Gcal, \ B \in \Gcal_0$
\begin{equation}\label{***}
 \bar{d}_N (
  \dist (\vee_{i=0}^{N-1} T^{-i} \Rcal \rest A \cap B), 
  \dist (\vee_{i=0}^{N-1} T^{-i} \Rcal \rest  A' \cap B))  < \ep. 
\end{equation}
That (\ref{**b}) implies (\ref{***}) with $2\ep$ is immediate.

For the converse implication observe first that 
the  distribution
 $ \dist (\vee_{i=0}^{N-1} T^{-i} \Rcal \rest B)$ is the average of  
 $ \dist (\vee_{i=0}^{N-1} T^{-i} \Rcal \rest A \cap B)$ over all $A \in \vee_{i=-k}^{k} T^{-i} \Rcal$,
  and that the $\bar{d}$ metric is a convex function of distributions.
Therefore fixing one $A' \in \Gcal$ and averaging over all $A \in \Gcal$ we get (\ref{**b}).   
  
  \br
  
\section{Positive entropy systems are dominant}\label{positive-entropy}

The next theorem is a relative version of Theorem 5.1 in \cite{GTW}
and serves as the main tool in the proof of Theorem \ref{D} below.

\begin{thm}\label{Thm-rB}
Let $\Xb = (X, \Xcal, \mu,T)$ be an ergodic system which is relative Bernoulli over $\Xb_0$ 
with finite relative entropy, so that $ \Xb = \Xb_0 \times \Xb_1$.
Then, the generic extension $\hat{S}$ of $T$ is relatively Bernoulli over $\Xb_0$.
\end{thm}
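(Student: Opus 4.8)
The plan is to run the proof of \cite[Theorem 5.1]{GTW} in the category of extensions \emph{relative to} the fixed factor $\Xb_0$, replacing the absolute very weak Bernoulli criterion used there by the relative criterion of Section~\ref{Sec-rB}, which by Thouvenot's theorem characterises relative Bernoullicity over $\Xb_0$. Fix a finite generating partition $\Rcal_0$ for $\Xb_0$ and a finite generating partition $\Kcal$ for the Bernoulli complement $\Xb_1$, so that $\Rcal:=\Rcal_0\vee\Kcal$ generates $\Xb$, and choose an increasing sequence $\Pcal_1\le\Pcal_2\le\cdots$ of finite partitions of the fibre $I$ whose join generates $\Ccal$. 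Writing $\Rcal,\Rcal_0,\Pcal_m$ also for their lifts to $X\times I$, the partitions $\Rcal\vee\Pcal_m$ increase to $\Xcal\times\Ccal$ and generate the $\hat S$-process. By the usual reduction of relative Bernoullicity to the relative very weak Bernoulli criterion, together with the standard limiting argument over $m$, the extension $\hat S$ is relatively Bernoulli over $\Xb_0$ if and only if $(\Rcal\vee\Pcal_m,\hat S)$ is relatively Bernoulli over $(\Rcal_0,T)$ for every $m$. It therefore suffices to prove that
\[
\Dcal:=\bigl\{\,S\in\Scal:\ (\Rcal\vee\Pcal_m,\hat S)\ \text{is relatively Bernoulli over}\ (\Rcal_0,T)\ \text{for every}\ m\,\bigr\}
\]
is a dense $G_\del$ subset of $\Scal$.

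First I would check that $\Dcal$ is a $G_\del$ set. For each $m$ pass to the finite reformulation \eqref{**a}--\eqref{**b}, in the equivalent form \eqref{***}. The new feature of the relative setting, and the one that makes the argument go through unchanged, is that the conditioning atoms $B\in\vee_{i=-k}^{k}T^{-i}\Rcal_0$ lie on the base $X$ and hence are \emph{not moved by the cocycle}; only the atoms $A,A'\in\vee_{i=-k}^{-1}(\hat S)^{-i}(\Rcal\vee\Pcal_m)$ and the conditional laws $\dist\bigl(\vee_{i=0}^{N-1}(\hat S)^{-i}(\Rcal\vee\Pcal_m)\rest A\cap B\bigr)$ depend on $S$, and they do so only through finitely many joint distributions of the partitions $(\hat S)^{-i}(\Rcal\vee\Pcal_m)$. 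In the Polish topology on $\Scal$ these finite--dimensional distributions vary continuously with $S$, and $\bar{d}_N$ is continuous in them; exactly as in \cite[Theorem 5.1]{GTW} this makes, for each $\ep=1/j$, the existence of admissible collections $\Gcal,\Gcal_0$ with some $N$ an open condition on $S$, so that $\Dcal$ is the intersection over $m$ and over $j$ of open sets.

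Next I would prove that $\Dcal$ is dense. It suffices to exhibit, arbitrarily close to any given $S$, a cocycle $S'$ lying in $\Dcal$, and for this I would take $S'$ making the fibre an \emph{independent} Bernoulli extension of $\Xb$, i.e.\ with $\hat{S'}\cong\Xb\times\Bb'$ for a Bernoulli system $\Bb'$ independent of $\Xb$. Then
\[
\hat{S'}\cong\Xb_0\times\Xb_1\times\Bb'\cong\Xb_0\times(\Xb_1\times\Bb'),
\]
and since $\Xb_1\times\Bb'$ is Bernoulli and independent of $\Xb_0$, the extension $\hat{S'}$ is relatively Bernoulli over $\Xb_0$, so $S'\in\Dcal$. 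That cocycles of this type are dense in $\Scal$ is precisely the density half of the argument for \cite[Theorem 5.1]{GTW}: one approximates $S$ by a finite--state fibre cocycle and then, on a Rokhlin tower for $T$, recodes the fibre names so that they form a process independent of the base, producing such an $S'$ within any prescribed distance of $S$.

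The step I expect to be the main obstacle is this last recoding: it must be carried out so that the new fibre process is genuinely independent of the whole base $\Xcal$ (in particular of $\Xcal_0$) while $\hat{S'}$ stays close to $\hat S$ in the topology of $\Scal$. However, because the cocycle never acts on the base, the presence of the subfactor $\Xb_0$ inside $\Xb$ introduces no difficulty beyond the absolute case, and the construction of \cite{GTW} applies verbatim. Combining the three steps, $\Dcal$ is a dense $G_\del$ subset of $\Scal$ on which $\hat S$ is relatively Bernoulli over $\Xb_0$, which is the assertion of the theorem.
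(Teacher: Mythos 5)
Your overall skeleton (reduce via inverse limits to a relative VWB criterion for the partitions $\Rcal\vee\Pcal_m$, then show the good set of cocycles is a dense $G_\del$) is the paper's, but both halves have genuine gaps, and the two halves are in fact incompatible with each other. For the $G_\del$ claim: the finite form of the relative VWB condition, \eqref{**a}--\eqref{**b} (or \eqref{***}), has the quantifier structure ``there exist $N$ and $k_0$ such that \emph{for all} $k>k_0$ \dots''. Continuity of finitely many joint distributions makes the condition at a \emph{single} scale $k$ open, but it does not remove the inner universal quantifier over $k$: an extension can satisfy the $\bar{d}$-estimate when conditioned on atoms $A\in\vee_{i=-k_0}^{-1}\hat{S}^{-i}\Pcal_n$, $B\in\vee_{i=-k_0}^{k_0}\hat{S}^{-i}\Rcal_0$ and still fail it at larger $k$. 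So the set you call $\Dcal$ is not visibly an intersection of open sets. The paper closes this hole with entropy: it works inside the dense $G_\del$ set $\Scal_0$ of entropy-preserving cocycles given by \cite[Theorem 4.1]{GTW}, and each basic set $U(n,N_1,N_2,\ep,\del)$ includes, besides the $\bar{d}$-condition, the conditional-entropy inequality $H(\vee_{i=0}^{N_1-1}\hat{S}^{-i}\Pcal_n\mid(\vee_{i=-N_2}^{-1}\hat{S}^{-i}\Pcal_n)\vee(\vee_{i=-N_2}^{N_2}\hat{S}^{-i}\Rcal_0))<N_1+\del$. Near-maximality of this conditional entropy forces approximate independence from the remoter past and the wider $\Rcal_0$-window, so the estimate checked at $k_0$ persists (with $2\ep$) for every $k>k_0$; this is precisely what turns the finite check into the full criterion and makes $\Scal_1=\bigcap\bigcup U$ a $G_\del$. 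This mechanism is absent from your write-up.

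The density half is where the incompatibility bites. Your witnesses are product extensions $\hat{S}'\cong\Xb\times\Bb'$ with $\Bb'$ a nontrivial Bernoulli system; any such extension has entropy $h(T)+h(\Bb')>h(T)$, hence lies \emph{outside} $\Scal_0$, i.e.\ outside the entropy-restricted $G_\del$ set that a correct Part I produces (these product extensions form a meager family, being disjoint from the dense $G_\del$ set $\Scal_0$). So even if they are dense, they cannot witness density of the relevant $G_\del$ set, and density of a meager set combined with a failed $G_\del$ argument yields nothing about genericity. The paper instead exhibits a \emph{single} entropy-preserving element of $\Scal_1$: a proper, infinite-to-one Bernoulli extension $\hat{\Xb}_1\to\Xb_1$ with the \emph{same} entropy, which exists by Rudolph's theorem on weakly mixing compact group extensions of Bernoulli systems \cite{Ru-79,Ru-85} (or the explicit Adler--Shields example \cite{A-S}); then $\Xb_0\times\hat{\Xb}_1\to\Xb$ lies in $\Scal_1$, and density follows from the relative Halmos theorem \cite[Proposition 2.3]{G-W}, since $\Scal_1$ is invariant under fiberwise conjugation and conjugates of a fixed ergodic extension are dense in $\Scal$. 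Your Rokhlin-tower recoding sketch cannot substitute for this step: besides the unaddressed difficulty of making the fibre process globally $\hat{S}'$-invariant and independent of the whole base $\sig$-algebra, any extension it produces with a nontrivial independent Bernoulli fibre necessarily adds entropy and therefore falls outside the set whose density you need.
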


\begin{proof}
For convenience we assume that the relative entropy is $1$.

As in \cite{GTW}
let $\Rcal \subset \Xcal$ be 
a finite relatively generating partition for $\Xb$ over $\Xb_0$ with entropy $1$
(so that $\Rcal$ is  a Bernoulli partition  independent of $\Xb_0$),
and let $\Rcal_0 \subset \Xcal_0$ be a finite generator for $\Xb_0$.
Let $\Scal$ be the collection of Rokhlin cocycles with values in MPT$(I, \la)$, where 
$\la$ is the normalized Lebesgue measure on the unit interval $I =[0,1]$.
Thus an element $S \in \Scal$ is a measurable map $x \mapsto S_x \in $ MPT$(I, \la)$,
and we associate to it the {\em skew product transformation}
$$
\hat{S}(x,u) = (Tx, S_x u),\quad  (x \in X, u \in I).
$$ 
Let $Y = X \times I$ and set $\Yb = (Y, \mathcal{Y}, \mu \times \la)$, with $\Ycal = \Xcal \otimes \Ccal$.

\br

{\bf Part I}:
By Theorem 4.1 of \cite{GTW} there is a 
dense $G_\del$ subset $\Scal_0 \subset \Scal$ with
$h(\hat{S}) = 1$ for every $S \in \Scal_0$.
We will first show that the collection of
the elements $S \in \Scal_0$ for which the corresponding $\hat{S}$ is 
relatively Bernoulli over $\Xb_0$ forms a $G_\del$ set.

As the inverse limit of relatively Bernoulli systems is relatively Bernoulli, see \cite[Proposition 7]{Th-75a},
to show that a transformation $T$ on $(X, \Xcal, \mu)$ is relatively 
Bernoulli over $\Xb_0$ it suffices to show that for a refining sequence of partitions
$$
\Pcal_1 \prec \cdots \prec \Pcal_n \prec \Pcal_{n+1} \prec \cdots
$$
such that the corresponding algebras $\hat{\Pcal}_n$ satisfy
$\bigvee_{n \in \N} \hat{\Pcal}_n = \Xcal$, for each $n$, the process $(T, \Pcal_n)$ is 
relatively VWB relative to $(T,\Rcal_0)$.

For each $n \in \N$ let $\Qcal_n$ denote the dyadic partition of $[0,1]$
into intervals of size $1/2^n$, and let 
$$
\Pcal_n = \Rcal \times \Qcal_n.
$$
For any $S \in \Scal_0$ the relative entropy of $\Yb = \Xb \times [0,1]$ over $\Xb_0$ is also $1$. 
Thus for all $n$ we have
$$
H(\Pcal_n \mid (\vee_{i= -\infty}^{-1}\hat{S}^{-i}\Pcal_n) \vee (\vee_{i= -\infty}^{\infty}\hat{S}^{-i}\Rcal_0)) =1,
$$
and for all $N \geq 1$
$$
H(\vee_{i=0}^{N-1}   \hat{S}^{-i}\Pcal_n \mid (\vee_{i= -\infty}^{-1}\hat{S}^{-i}\Pcal_n)  \vee 
(\vee_{i= -\infty}^{\infty}\hat{S}^{-i}\Rcal_0)) =N.
$$
Therefore, we can find a suitably small $\del >0$ such that for $k_0$ large enough
$$
H(\vee_{i=0}^{N-1}   \hat{S}^{-i}\Pcal_n \mid (\vee_{i= -k_0}^{-1}\hat{S}^{-i}\Pcal_n)  \vee 
(\vee_{i= -k_0}^{k_0}\hat{S}^{-i}\Rcal_0))  < N +\del.
$$

Now, conditioned on the partition
$$
 (\vee_{i= -k_0}^{-1}\hat{S}^{-i}\Pcal_n)  \vee (\vee_{i= -k_0}^{k_0}\hat{S}^{-i}\Rcal_0)
$$
the partition $\vee_{i=0}^{N-1}\hat{S}^{-i}\Pcal_n$ will be $\eta$-independent of
$$
(\vee_{i= -k}^{-k_0 -1}\hat{S}^{-i}\Pcal_n)  \vee (\vee_{i= -k}^{-k_0 +1}\hat{S}^{-i}\Rcal_0)
\vee (\vee_{i= k_0 +1}^{k} \hat{S}^{-i}\Rcal_0)
$$
for all $k \geq k_0$
for $\eta$ small enough 
(see Definition 5.1 in \cite{GTW} and the following discussion), so that 
the inequality 
(\ref{***}) in Section \ref{Sec-rB} (with $\Pcal_n$ replacing $\Rcal$)
for $k = k_0$ will imply (\ref{***}) with $2\ep$, for all $k > k_0$.

\br 

Define the set $U(n, N_1, N_2, \ep, \del)$ to consist of those $S \in \Scal_0$ that satisfy:
\begin{enumerate}
\item
$H(\vee_{i=0}^{N_1 -1} \hat{S}^{-i} \Pcal_n \mid (\vee_{i=-N_2}^{-1} \hat{S}^{-i} \Pcal_n) 
\vee (\vee_{i=-N_2}^{N_2} \hat{S}^{-i} \Rcal_0   )) < N_1 + \del$,
\item
\begin{align*}
& \bar{d}_{N_1} \left(\vee_{i=0}^{N_1 -1} \hat{S}^{-i} \Pcal_n \rest A \cap B,  
\vee_{i=0}^{N_1 -1} \hat{S}^{-i} \Pcal_n \rest A' \cap B \right)  < \ep, \\
&  {\text{for a set of atoms}} \  A, A' \in \Gcal, \ B \in \Gcal_0, \\
& \ {\text{where}}\  
\Gcal \subset \vee_{-N_2}^{-1}\hat{S}^{-i} \Pcal_n, 
\ \Gcal_0 \subset \vee_{-N_2}^{N_2} \hat{S}^{-i} \Rcal_0 \\
&  {\text{and}}\ 
(\mu \times \la)\left(\bigcup \{A\cap B : A \in \Gcal, \ B \in \Gcal_0\} \right) > 1 - \ep.
\end{align*}
\end{enumerate}
Now the sets $U(n, N_1, N_2, \ep, \del)$ are open (easy to check) and that the $G_\del$ set
$$
\Scal_1 = \bigcap_{n, k, l} \bigcup_{N_1, N_2} U(n, N_1, N_2, 1/k, 1/l)
$$
comprises exactly the elements $S \in \Scal_0$ for which the corresponding $\hat{S}$ is 
relatively Bernoulli over $\Xb_0$. 
Thus, if $S \in \Scal_0$ is such that $\hat{S}$ is relatively  Bernoulli, then for every $n, \ep, \del$, there are
$N_1, N_2$ such that $S \in U(n, N_1, N_2, \ep, \del)$, and conversely, for every 
relatively  Bernoulli $\hat{S}$ the corresponding $S$ is in $\Scal_1$.

\br 

{\bf Part II}:
The collection $\Scal_1$ is nonempty. To see this we first note that
the Bernoulli system $\Xb_1$ admits a proper extension 
$\hat{\Xb}_1 \to \Xb_1$ which is also  Bernoulli and has the same entropy.
This follows e.g. by a deep result of Rudolph \cite{Ru-79, Ru-85} who showed that
every weakly mixing group extension of $\Xb_1$ is again a Bernoulli system.
An explicit example of such an extension of the $2$-shift is given by Adler and Shields, \cite{A-S}.
Since $\hat{\Xb}_1$ is weakly mixing the product system 
$\hat{\Xb} = \Xb_0 \times \hat{\Xb}_1$ is ergodic and 
$\hat{\Xb} \to \Xb_0$ is an element of $\Scal_1$.

Now apply the relative Halmos theorem  \cite[Proposition 2.3]{G-W}, to deduce that the 
$G_\del$ subset $\Scal_1$ is dense in $\Scal$, as claimed. 
\end{proof}

\br

We can now deduce the positive entropy part of our main result.

\begin{thm}\label{D}
Every ergodic system $\Xb = (X, \Xcal, \mu,T)$ of positive finite entropy is dominant.
\end{thm}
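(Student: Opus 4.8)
The plan is to assemble the positive entropy result from three ingredients already in hand: Austin's weak Pinsker theorem \cite{Au}, the relative dominance statement Theorem \ref{Thm-rB}, and Ornstein's isomorphism theorem \cite{O}. First I would apply the weak Pinsker theorem to the given ergodic system of positive finite entropy to write it as a product
$$
\Xb \cong \Xb_0 \times \Xb_1,
$$
where $\Xb_1$ is a Bernoulli system of positive finite entropy (choosing the complementary factor $\Xb_0$ to have sufficiently small entropy forces $h(\Xb_1) > 0$). This decomposition exhibits $\Xb$ as relatively Bernoulli over $\Xb_0$ in the sense of Definition \ref{d:rB}, with finite relative entropy equal to $h(\Xb_1)$, which places us exactly under the hypotheses of Theorem \ref{Thm-rB}.

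Next I would invoke Theorem \ref{Thm-rB}, which furnishes a dense $G_\del$ subset $\Scal_1 \subset \Scal$ (the set $\Scal_1$ built in that proof) such that for every $S \in \Scal_1$ the generic extension $\hat{S}$ of $T$ is relatively Bernoulli over $\Xb_0$; that is, $\hat{S} \cong \Xb_0 \times \hat{\Xb}_1$ for some Bernoulli system $\hat{\Xb}_1$. Since $\Scal_1 \subset \Scal_0$, where $\Scal_0$ is the entropy-preserving dense $G_\del$ set supplied by Theorem 4.1 of \cite{GTW}, every such $\hat{S}$ satisfies $h(\hat{S}) = h(T)$. Because $\Xb_0$ is a common factor of both $\Xb$ and $\hat{S}$, preservation of the total entropy forces preservation of the relative entropy over $\Xb_0$, so $h(\hat{\Xb}_1) = h(\Xb_1)$.

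Finally I would close the argument with Ornstein's theorem \cite{O}: two Bernoulli systems of equal finite entropy are isomorphic, whence $\hat{\Xb}_1 \cong \Xb_1$ and therefore
$$
\hat{S} \cong \Xb_0 \times \hat{\Xb}_1 \cong \Xb_0 \times \Xb_1 \cong \Xb,
$$
which is to say $\hat{S} \cong T$, for every $S \in \Scal_1$. Since $\Scal_1$ is a dense $G_\del$ subset of $\Scal$, this is precisely the assertion that $\Xb$ is dominant.

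I expect the only genuinely delicate point to be the entropy bookkeeping in the second step: verifying that the Bernoulli complement $\hat{\Xb}_1$ appearing in the relatively Bernoulli decomposition of $\hat{S}$ carries exactly the relative entropy $h(\Xb_1)$, and not some larger value contributed by the adjoined interval fibre. This is controlled by the entropy preservation built into $\Scal_0$, but it is the step on which the matching of the two Bernoulli factors, and hence the whole isomorphism, rests; everything else is a direct assembly of the cited results.
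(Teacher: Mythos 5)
Your proposal is correct and follows essentially the same route as the paper: weak Pinsker decomposition $\Xb \cong \Xb_0 \times \Xb_1$, Theorem \ref{Thm-rB} to make the generic extension relatively Bernoulli over $\Xb_0$, and Ornstein's theorem to identify the Bernoulli complements. The entropy bookkeeping you flag as the delicate point is exactly what the paper absorbs into Theorem \ref{Thm-rB} (via $\Scal_1 \subset \Scal_0$, the entropy-preserving set from Theorem 4.1 of \cite{GTW}), so your explicit verification of $h(\hat{\Xb}_1) = h(\Xb_1)$ is just a spelled-out version of the paper's assertion that $\Bb'$ has the same entropy as $\Bb$.
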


\begin{proof}
By Austin's weak Pinsker theorem \cite{Au} we can present $\Xb$ as a product system
$\Xb = \Bb \times \Zb$, where $\Bb$ is a Bernoulli system with finite entropy.
Thus $\Xb$ is relatively Bernoulli over $\Zb$, and by Theorem \ref{Thm-rB} it follows that a generic extension 
$\hat{S}$ of $\Xb$ is relatively Bernoulli over $\Zb$. Therefore, for such $\hat{S}$ the system
$\Yb = (X \times I, \mathcal{X} \times \Ccal, \mu \times \la, \hat{S})$
is again of the form $\Yb = \Bb' \times \Zb$ with $\Bb'$ a Bernoulli system
with  the same entropy as that of $\Bb$.
By Ornstein's theorem \cite{O} $\Bb \cong \Bb'$, whence also $\Xb \cong \Yb$
and our proof is complete.
\end{proof}

\begin{rmk}
With notations as in the proofs of Theorems  \ref{Thm-rB} and  \ref{D} observe that
for every $S \in \Scal$ the system $(Y, \mu \times \la,\hat{S})$ admits $\Zb = (Z, \Zcal, \mu,T)$
(with $\Zcal$ considered as a subalgebra of $\Xcal$) as a factor:
$$
(Y, \mu \times \la,\hat{S}) \to \Xb \to \Zb.
$$
In the Polish group $G=$MPT$(Y,\mu \times \la)$ consider the closed subgroup
$G_{\Zb} = \{g \in G : gA =A ,\ \forall A \in\Zcal\}$.
We now observe that the residual set $\Scal_1 \subset \Scal_0$, of 
those $S \in \Scal_0$ for which $\hat{S}$ is Bernoulli over $\Zb$
with the same relative entropy over $\Xb$ 
is a single orbit for the action of $G_{\Zb}$ under conjugation.
\end{rmk}

In the last section (Section \ref{sec-A}) we will show that the positive entropy theorem 
holds for any countable amenable group.

\br

In  \cite[Theorem 6.4]{GTW}  it was shown that the generic extension of a 
K-automorphism is a mixing extension.
 We will next prove an analogous theorem for a general ergodic system
 with positive entropy.
We first prove the following relatively Bernoulli analogue of Theorem 6.2  in \cite{GTW}. 


\begin{thm}\label{Thm-rBB}
Let $\Xb = (X,\Xcal,\mu,T)$ be a relatively Bernoulli system over  $\Xb_0$, and $S$ a Rokhlin cocycle
with values in MPT$(I, \la)$, where $I =[0,1]$ and $\la$ is Lebesgue measure on $I$.
We denote by $\hat{S}$ the transformation 
$$
\hat{S}(x,u) = (Tx, S_xu),
$$
on $Y = X \times I$, and let
$$
\check{S} 
(x, u, v) = (Tx, S_xu, S_xv), \quad (x,u,v) \in W = X \times I \times I,
$$
be the relative independent product of $\Yb$ with itself 
over $\Xb$. 
Then for a generic $S \in \Scal$ the transformation $\check{S}$ is relatively Bernoulli over $\Xb_0$. 
\end{thm}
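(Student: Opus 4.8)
The plan is to follow the two–part architecture of the proof of Theorem \ref{Thm-rB}, now applied to $\check S$ in place of $\hat S$. Observe first that $\check S$ is itself a skew product over $T$, namely $\check S(x,u,v)=(Tx,S_xu,S_xv)$, with fibre $I\times I$ and the \emph{diagonal} cocycle $x\mapsto S_x\times S_x$; in particular $S\mapsto\check S$ is continuous for the Polish topology on $\Scal$. For a generic $S\in\Scal_0$ the extension $\hat S\to\Xb$ has zero relative entropy, and since the second fibre coordinate of $\check S$ is relatively independent of the first over $\Xb$, the further extension $\check S\to\hat S$ is of zero relative entropy as well; hence the relative entropy of $\check S$ over $\Xb_0$ again equals $1$. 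This is exactly the normalisation under which the machinery of Theorem \ref{Thm-rB} applies.

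\textbf{Part I.} I would show that $\{S\in\Scal_0:\check S\text{ is relatively Bernoulli over }\Xb_0\}$ is a $G_\del$ set. Take the relatively generating partitions $\Pcal_n=\Rcal\times\Qcal_n\times\Qcal_n$ of $W=X\times I\times I$ over $\Xcal_0$, which refine to $\check\Ycal$ as $n\to\infty$, and characterise relative Bernoullicity of $\check S$ over $\Xb_0$ through the relatively VWB criterion (\ref{***}) of Section \ref{Sec-rB}, with $\check S$ and $\Pcal_n$ in the roles of $T$ and $\Rcal$. Exactly as in Theorem \ref{Thm-rB}, I would define open sets $U(n,N_1,N_2,\ep,\del)$ by the conditional–entropy bound together with the $\bar d_{N_1}$ estimate, now formed from $\check S$; their openness is inherited from the continuity of $S\mapsto\check S$, and $\bigcap_{n,k,l}\bigcup_{N_1,N_2}U(n,N_1,N_2,1/k,1/l)$ is precisely the set of $S\in\Scal_0$ for which $\check S$ is relatively Bernoulli over $\Xb_0$. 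This part is a routine transcription of Part I above.

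\textbf{Part II.} The remaining task is to show this $G_\del$ set is dense; for this I would first prove it nonempty and then invoke the relative Halmos theorem \cite[Proposition 2.3]{G-W}, together with the conjugation invariance of the set (as in the Remark following Theorem \ref{D}), to upgrade nonemptiness to density. The delicate point is nonemptiness, and here the group–extension example used in Part II of Theorem \ref{Thm-rB} is of no help: the relatively independent self-joining of a compact group extension over its base is never ergodic, since a nontrivial character of the group yields an invariant function of $g_1^{-1}g_2$, so such an $\hat S$ can never produce a relatively Bernoulli $\check S$. Instead I would select a single extension $\Yb\to\Xb$ that is simultaneously relatively weakly mixing over $\Xb$ and relatively Bernoulli over $\Xb_0$: the first property holds for a generic $S$ by a Halmos–type category argument and guarantees that $\check\Yb$ is ergodic, while the second is supplied by Theorem \ref{Thm-rB}, so the intersection of these two dense $G_\del$ conditions furnishes such an $S$. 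For that fixed extension one then verifies, via the relative Ornstein (finitely determined / VWB) theory, that its relatively independent self-joining is again relatively Bernoulli over $\Xb_0$.

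The main obstacle is precisely this last verification. The subtlety is that controlling the $\bar d_N$–distance (\ref{***}) between the conditional distributions of $\vee_{i=0}^{N-1}\check S^{-i}\Pcal_n$ on different atoms now involves the joint behaviour of the two fibre coordinates $u,v$, and it is exactly relative weak mixing over $\Xb$ — the absence of any intermediate isometric extension — that rules out the obstruction exhibited by group extensions and lets the relative VWB property of $\hat S$ propagate to $\check S$. Once that is established, Part I identifies the good set as $G_\del$, Part II makes it dense, and Baire category yields that $\check S$ is relatively Bernoulli over $\Xb_0$ for a generic $S\in\Scal$.
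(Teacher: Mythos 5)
Your Part I is exactly the paper's Part I (the paper takes $\Qcal_n$ to be the dyadic partition of $I\times I$ into squares and sets $\Pcal_n=\Rcal\times\Qcal_n$, which is the same as your $\Rcal\times\Qcal_n\times\Qcal_n$), and your overall architecture --- nonempty $G_\del$ plus the relative Halmos theorem of \cite[Proposition 2.3]{G-W} --- is also the paper's. Your observation that the group-extension witness from Part II of Theorem \ref{Thm-rB} is useless here, because the relatively independent self-joining of a compact group extension carries the invariant function $g_1^{-1}g_2$ and so is not even ergodic, is correct and is precisely why a different witness is needed. The problem is your replacement for that witness. You pick $S$ so that $\hat S$ is relatively weakly mixing over $\Xb$ and relatively Bernoulli over $\Xb_0$, and then assert that ``relative Ornstein (finitely determined / VWB) theory'' shows that $\check S=\hat S\underset{\Xb}{\times}\hat S$ is relatively Bernoulli over $\Xb_0$. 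This is not a proof, and it is not a known theorem. Relative weak mixing over $\Xb$ buys exactly one thing: ergodicity of $\check S$. It gives no control whatsoever of the $\bar d_{N}$-distances in (\ref{***}) for the \emph{pair} process $\vee_{i=0}^{N-1}\check S^{-i}\Pcal_n$; the VWB property of each marginal $\hat S$-process plus ergodicity of the joining is far from VWB of the joint process. Relatively independent joinings are precisely the standard mechanism by which Bernoullicity (and even the K property) is destroyed in the classical counterexamples, and whether relative Bernoullicity survives such a joining is a delicate property of the particular cocycle, not a soft consequence of relative weak mixing. So the step you yourself flag as ``the main obstacle'' is the entire content of Part II, and your proposal leaves it unproven.

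What the paper does at this point is import a hard explicit construction: Hoffman \cite{Ho} builds a skew product over a Bernoulli base whose ``doubled'' skew product (the relatively independent self-joining over the base) is proved to be Bernoulli (his Sections 4 and 5), and the base can be arranged to have arbitrarily small entropy (his Section 3). Running this construction inside the Bernoulli component of $\Xb=\Xb_0\times\Xb_1$, i.e.\ taking the cocycle measurable with respect to $\Xb_1$, makes $\check S$ isomorphic to $\Xb_0\times(\text{Bernoulli})$, hence relatively Bernoulli over $\Xb_0$; this exhibits the required point of $\Scal_1$. To complete your argument you would either have to cite this construction in place of your abstract step, or prove the propagation claim ``relative weak mixing over $\Xb$ $+$ relative Bernoulli over $\Xb_0$ $\Rightarrow$ $\check S$ relatively Bernoulli over $\Xb_0$'' --- and the fact that the authors resort to Hoffman's construction is strong evidence that no such soft implication is available.
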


\begin{proof}
For the $G_\del$ part we follow, almost verbatim, the proof of Theorem  \ref{Thm-rB},
where we now let $\Qcal_n$ denote the product dyadic partition of $I \times I$ into squares of size 
$\frac{1}{2^n} \times \frac{1}{2^n}$ and,
with notations as in the proof of
Theorem  \ref{Thm-rB}, we let $\Pcal_n = \Rcal \times \Qcal_n$.

Thus it only remains to show that the $G_\del$ set $\Scal_1$, comprising those 
$S \in \Scal_0$ for which $\check{S}$ is relatively Bernoulli on $W = X \times I \times I$ relative to
$\Xb_0$, is non empty.
Now examples of skew products over a Bernoulli system with such properties
 are  provided by Hoffman in \cite{Ho}.
The base Bernoulli transformation that Hoffman constructs for his example can be 
arranged to have arbitrarily small entropy 
by an appropriate choice of the parameters 
used in the construction in section 3
(the skew product example is in section 4 and the proof of Bernoullicity is in section 5).
Using such construction on $\Xb$ (where the cocycle is measurable
with respect to the Bernoulli direct component of $\Xb$)
we obtain our required extension of $\Xb$.
This completes our proof.
\end{proof}

We also recall the following criterion \cite[Lemma 6.5]{GTW}:

\begin{lem}\label{6.5}
Let $\Xb$ be ergodic and $\Yb$ be a factor of $\Xb$. Then the following are equivalent:
\begin{enumerate}
\item
$\Xb$ is a relatively mixing extension of $\Yb$.
\item
In the relatively independent product $X\underset {Y}{\times} X$, the Koopman operator restricted to $L^2(Y)^{\perp}$ is mixing.
\end{enumerate}
\end{lem}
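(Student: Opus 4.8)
The plan is to reduce the whole statement to a single bilinear computation inside the relatively independent self-joining, after identifying the relevant Hilbert-space pieces. Write $P=\E(\,\cdot\mid\Ycal)$ for the conditional expectation onto the factor, $U$ for the Koopman operator of $T\times T$ on $L^2(X\underset{Y}{\times}X)$, and $U_Y$ for the Koopman operator of the factor; since $\Ycal$ is $T$-invariant, $P$ commutes with the Koopman operator and $P(U_T^n f)=U_Y^nPf$. I would take as the working definition of relative mixing that $\|\E(f\cdot U_T^n g\mid\Ycal)-(Pf)(U_Y^nPg)\|_{L^2(\nu)}\to 0$ for all $f,g\in L^\infty(X)$; working with bounded functions lets me freely pass between convergence in measure, in $L^1$ and in $L^2$ using the uniform bounds, which removes any ambiguity in the precise mode of convergence intended in the definition.

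First I would record the two structural facts about the joining. The measure $\mu\times_Y\mu$ is carried by the fibre product, so a function pulled back from $Y$ through the first coordinate agrees a.e.\ with its pullback through the second; consequently the orthogonal projection of a product function $a\otimes b$ onto $L^2(Y)$ equals $(Pa)(Pb)$, and hence $a\otimes b\in L^2(Y)^\perp$ precisely when $(Pa)(Pb)=0$ in $L^2(\nu)$. Decomposing each factor as $a=Pa+(a-Pa)$ then shows that such product functions (indeed those with one factor in $L^2_0:=L^2(X)\ominus L^2(Y)$) span a dense subspace of $L^2(Y)^\perp$. The second fact is the key inner-product identity, obtained by disintegrating over $Y$: for $a,b,c,d\in L^2(X)$,
$$\langle U^n(a\otimes b),\,c\otimes d\rangle=\int_Y \E\big((U_T^na)\,\bar c\mid\Ycal\big)\,\E\big((U_T^nb)\,\bar d\mid\Ycal\big)\,d\nu.$$

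For the implication (1)$\,\Rightarrow\,$(2), since $U$ is unitary it suffices to verify $\langle U^nF,G\rangle\to 0$ on the dense family of product functions above. Substituting relative mixing into the identity replaces each conditional correlation by $(U_Y^nPa)\overline{Pc}$ up to an $L^2(\nu)$-error tending to $0$; the cross terms vanish by Cauchy--Schwarz (one factor $\to 0$ in $L^2$, the other bounded because the functions are in $L^\infty$), while the surviving term is $\langle U_Y^n\big((Pa)(Pb)\big),\,(Pc)(Pd)\rangle_{L^2(\nu)}$, using that $U_Y$ is multiplicative. This vanishes identically because $(Pa)(Pb)=0$ for $F\in L^2(Y)^\perp$. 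For (2)$\,\Rightarrow\,$(1), it is enough to show $\E((U_T^na')\,\bar{c'}\mid\Ycal)\to 0$ in $L^2(\nu)$ whenever $a',c'\in L^2_0$, the general case reducing to this after splitting off the $Y$-components of $f$ and $g$. The same disintegration gives the self-correlation identity
$$\big\|\E\big((U_T^na')\,\bar{c'}\mid\Ycal\big)\big\|_{L^2(\nu)}^2=\big\langle U^n(a'\otimes\overline{a'}),\,c'\otimes\overline{c'}\big\rangle,$$
and since $a'\otimes\overline{a'}$ and $c'\otimes\overline{c'}$ both lie in $L^2(Y)^\perp$ (their projections are $(Pa')(P\overline{a'})=0$ and likewise for $c'$), the right-hand side tends to $0$ by hypothesis (2).

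I expect the main obstacle to be organizational rather than deep: correctly identifying $L^2(Y)^\perp$ inside the relatively independent product, checking that the mean-zero product functions form a total set, and carrying out the bookkeeping that reduces relative mixing to the fibre-mean-zero case $Pf=Pg=0$. The one genuinely substantive point is the observation that $a\otimes b$ projects to $(Pa)(Pb)$, so that membership in $L^2(Y)^\perp$ forces the product of the fibre-means to vanish; this is exactly what makes the main term disappear in the first direction and what places $a'\otimes\overline{a'}$ in $L^2(Y)^\perp$ in the second, after which everything follows from the single disintegration identity together with Cauchy--Schwarz and the unitarity of $U$.
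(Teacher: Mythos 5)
Your proof is correct: the identification of the orthogonal projection of $a\otimes b$ onto $L^2(Y)$ with $(Pa)(Pb)$, the disintegration identity $\langle U^n(a\otimes b),c\otimes d\rangle=\int_Y \E\big((U_T^na)\,\bar c\mid\Ycal\big)\,\E\big((U_T^nb)\,\bar d\mid\Ycal\big)\,d\nu$, and the reduction of relative mixing to the fibre-mean-zero case (the cross terms vanishing because $\Ycal$ is $T$-invariant) yield both implications exactly as you describe, with the self-correlation identity closing the converse direction. Note that the paper itself gives no proof of this lemma --- it is recalled as a criterion from \cite[Lemma 6.5]{GTW} --- and your argument is the standard one for this equivalence, so there is nothing substantive to contrast.
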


\br

\begin{thm}\label{MixD}
Let $\Xb =(X, \Xcal,\mu,T)$ be an ergodic system with positive entropy, then the generic extension of $\Xb$ 
is relatively mixing over $\Xb$.
\end{thm}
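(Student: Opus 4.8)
The plan is to deduce this from the relatively Bernoulli self-joining statement of Theorem \ref{Thm-rBB} together with the Koopman criterion for relative mixing in Lemma \ref{6.5}. Applied to the extension $\hat{S} \to \Xb$, Lemma \ref{6.5} says that $\hat{S}$ is relatively mixing over $\Xb$ precisely when the Koopman operator of the relatively independent self-product $\hat{S} \times_{\Xb} \hat{S}$ --- which is exactly the transformation $\check{S}$ on $W = X \times I \times I$ --- is mixing on $L^2(\Xb)^{\perp}$, the orthocomplement taken inside $L^2(W)$. So the whole statement reduces to producing, generically in $S$, this mixing property of $\check{S}$.

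First I would invoke Austin's weak Pinsker theorem \cite{Au} to write $\Xb = \Bb \times \Zb$ with $\Bb$ a Bernoulli factor of finite entropy, and set $\Xb_0 = \Zb$; then $\Xb$ is relatively Bernoulli over $\Xb_0$. Theorem \ref{Thm-rBB} now applies verbatim and furnishes a dense $G_\del$ set of cocycles $S \in \Scal$ for which $\check{S}$ is relatively Bernoulli over $\Xb_0$. This is the substantive input, and all the genuine work --- the $G_\del$ argument and the nonemptiness via Hoffman's example --- is already carried out there.

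It then remains to pass from ``relatively Bernoulli over $\Xb_0$'' to the mixing statement demanded by Lemma \ref{6.5}, namely mixing of the Koopman operator of $\check{S}$ on $L^2(\Xb)^{\perp}$. By Definition \ref{d:rB}, relative Bernoullicity over $\Xb_0$ gives an isomorphism $\check{S} \cong \Xb_0 \times \Bb''$ with $\Bb''$ Bernoulli, under which the Koopman operator becomes a tensor product and the orthocomplement $L^2(\Xb_0)^{\perp}$ identifies with $L^2(\Xb_0) \otimes L^2_0(\Bb'')$; since $\Bb''$ is mixing, the Koopman operator of $\check{S}$ is mixing on $L^2(\Xb_0)^{\perp}$, i.e. $\check{S}$ is relatively mixing over $\Xb_0$. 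The final --- and essentially the only genuinely new --- observation is that $\Xb_0$ is a factor of $\Xb$, which is in turn a factor of $\check{S}$ via the coordinate projection $(x,u,v)\mapsto x$, so that $L^2(\Xb)^{\perp} \subseteq L^2(\Xb_0)^{\perp}$ and both subspaces are invariant under the Koopman operator; mixing on the larger invariant subspace therefore restricts to mixing on $L^2(\Xb)^{\perp}$. Feeding this into Lemma \ref{6.5} yields that $\hat{S}$ is relatively mixing over $\Xb$ for every $S$ in the dense $G_\del$ set, which is the assertion.

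I expect the only real obstacle to be bookkeeping rather than mathematics: one must keep straight the three nested factors $\Xb_0 \subseteq \Xb \subseteq \check{S}$ and apply Lemma \ref{6.5} with the correct roles, so that the ``base'' there is $\Xb$ and not $\Xb_0$, and one must verify that relative mixing over the \emph{smaller} algebra $\Xb_0$ indeed delivers the mixing over the \emph{larger} factor $\Xb$ that the criterion requires. This is precisely the subspace inclusion $L^2(\Xb)^{\perp} \subseteq L^2(\Xb_0)^{\perp}$ together with Koopman-invariance of both subspaces.
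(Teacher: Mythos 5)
Your proposal is correct and takes essentially the same route as the paper's proof: Austin's weak Pinsker theorem to write $\Xb = \Zb \times \Bb$, Theorem \ref{Thm-rBB} to get that generically $\check{S}$ is relatively Bernoulli over $\Zb$, and then Lemma \ref{6.5} together with the passage from the smaller factor $\Zb$ to the larger factor $\Xb$ (the paper's ``a fortiori'' step, which is exactly your inclusion $L^2(\Xb)^{\perp} \subseteq L^2(\Zb)^{\perp}$). Your write-up simply makes the paper's final Koopman-operator bookkeeping more explicit.
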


\begin{proof}
By the weak Pinsker theorem \cite{Au} we can present $\Xb$ as a product system
$\Xb = \Zb \times \Bb$, where $\Bb$ is a Bernoulli system with finite entropy.
Thus $\Xb$ is relatively Bernoulli over $\Zb$, 
and by Theorem \ref{Thm-rBB} it follows that a generic extension 
$\check{S}$ of $\Xb$ to $X \times I \times I$ is still relatively Bernoulli over $\Zb$. 
Thus the extended system $\Wb$ on $W = X \times I \times I$ with $\check{S}$ action, has the form 
$\Wb = \Zb \times \Bb'$ with $\Bb'$ again a Bernoulli system.

Now for the system $\Yb$, defined on $Y = X \times I$ by
$$
\hat{S}(x,u) = (Tx, S_xu),
$$
we have that the corresponding relative product system
$\Yb \underset{\Xb}{\times} \Yb$ is isomorphic to $\Wb$,
which is a Bernoulli extension of $\Zb$ and therefore, 
by  Lemma \ref{6.5}, a relatively mixing extension of $\Zb$.
A fortiori $\Yb \underset{\Xb}{\times} \Yb$ is a relatively mixing
extension of $\Xb$ and our proof is complete.
\end{proof}

\br

\section{Zero entropy systems are not dominant}\label{0entropy}

\begin{defn}\ 
\begin{itemize}
\item
For $\om, \om' \in \{0,1\}^n$ the {\em Hamming} 
(or {\em $\bar{d}$-distance)} is defined by
$$
\bar{d}(\om, \om') =\frac1n\#\{0 \le i <n : \om_i \not= \om'_i\}.
$$
\item
For two measurable partitions $Q =\{A_i\}_{i=1}^n , \ \hat{Q} = \{B_i\}_{i=1}^n$ 
of a measured space $(X,\mu)$, the distance $d(Q,\hat{Q})$ is defined by
$$
d(Q,\hat{Q})  = \frac12 \sum_{i=1}^n \mu (A_i \tri B_i). 
$$
\end{itemize}
\end{defn}

\br

\begin{thm}\label{thm-0e}
Every ergodic system $\Xb$ with zero entropy is not dominant.
\end{thm}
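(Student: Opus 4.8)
The plan is to argue by Baire category against dominance. Since $\Xb$ is dominant exactly when $\{S\in\Scal:\hat S\cong\Xb\}$ contains a dense $G_\del$ set, it suffices to produce a dense $G_\del$ set of cocycles $S$ for which $\hat S\not\cong\Xb$: two dense $G_\del$ sets always meet, so the two possibilities are mutually exclusive. Ordinary entropy is useless here, because $\Xb$ has zero entropy and, by Theorem 4.1 of \cite{GTW}, so does the generic extension $\hat S$. I would therefore separate $\hat S$ from $\Xb$ by the finer conjugacy invariant furnished by the slow entropy (measured complexity) of Katok and Thouvenot \cite{K-T}: for a process $(R,\Pcal)$ one counts the least number $N(n,\ep,\Pcal)$ of $\bar d$-balls of radius $\ep$ in the space of $\Pcal$-names of length $n$ needed to cover a set of measure $>1-\ep$, and records the subexponential growth of $N(n,\ep,\Pcal)$ as $n\to\infty$ and $\ep\to0$ along a generating sequence of partitions. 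For zero-entropy systems this growth lies strictly below the exponential scale, the resulting rate is a conjugacy invariant, and it is monotone under factors, so the slow entropy of $\hat S$ is always at least that of its factor $\Xb$. Hence it will be enough to make the inequality strict generically.

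With this in hand I would fix a finite generating partition $\Rcal$ for $\Xb$ and the partitions $\Pcal_n=\Rcal\times\Qcal_n$, with $\Qcal_n$ the dyadic partition of the fiber $I$, exactly as in the proof of Theorem \ref{Thm-rB}, so that $\{\Pcal_n\}$ generates for every $\hat S$. Since $\Xb$ is a fixed zero-entropy system, its slow entropy is pinned to a definite subexponential scale $a_n=o(n)$: there is a gauge against which $N_{\Xb}(n,\ep,\Rcal)$ stays controlled. The whole weight of the argument then falls on showing that a generic cocycle inflates the fiber complexity past this scale.

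The crux, and the step I expect to be the main obstacle, is the lower bound: for a generic $S$ the transformation $\hat S$ has slow entropy strictly exceeding that of $\Xb$. The mechanism is that for a generic element of the Polish group of fiber cocycles the maps $S_{T^{n-1}x}\cdots S_x$ separate nearby points of $I$ so rapidly that the number $N_{\hat S}(n,\ep,\Pcal_m)$ of Hamming-separated names grows faster than $e^{a_n}$, while remaining subexponential (consistent with $h(\hat S)=0$). To make this generic and of the right topological type I would (i) verify that for fixed $m,M,\ep$ the condition that $N_{\hat S}(n,\ep,\Pcal_m)\ge e^{M a_n}$ for infinitely many $n$ is a $G_\del$ condition on $S$, since lower bounds on covering numbers are open in the cocycle topology; and (ii) establish density of each such set by exhibiting a single cocycle $S^\ast$ whose extension realises arbitrarily large subexponential fiber complexity, and then invoking the relative Halmos theorem \cite[Proposition 2.3]{G-W}, used already in Part II of the proof of Theorem \ref{Thm-rB}, to spread its conjugacy class — on which the slow entropy is constant — densely through $\Scal$. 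Intersecting over a countable family of thresholds $M$ and scales exhausting $a_n$ yields a dense $G_\del$ set $\Scal_\infty\subset\Scal$ on which the slow entropy of $\hat S$ is infinite at the scale $a_n$.

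Finally, for every $S\in\Scal_\infty$ the slow entropy of $\hat S$ at scale $a_n$ exceeds the finite value realised by $\Xb$, so by conjugacy-invariance of slow entropy $\hat S\not\cong\Xb$. Thus $\{S:\hat S\cong\Xb\}$ is disjoint from the dense $G_\del$ set $\Scal_\infty$ and hence cannot itself contain a dense $G_\del$ set; by the definition of dominance, $\Xb$ is not dominant. The genuinely hard point is the quantitative fiber estimate in the crux paragraph — controlling, for a generic cocycle, the exact subexponential rate at which dyadic fiber names proliferate — and packaging it simultaneously as an open (hence $G_\del$) and a dense condition.
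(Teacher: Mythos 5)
Your overall strategy is the same as the paper's: separate a generic $\hat S$ from $\Xb$ by covering numbers at a subexponential gauge (slow entropy), observe that such bounds transfer through any putative isomorphism, and finish by Baire category. But the proposal has two genuine gaps, and they sit exactly where the paper does its real work. The first is your density mechanism. The relative Halmos theorem \cite[Proposition 2.3]{G-W} spreads the conjugacy class of a single cocycle $S^\ast$ densely through $\Scal$, but the sets you need to be dense are defined by covering-number lower bounds for the \emph{fixed} partitions $\Pcal_m$, and such bounds are not conjugation-invariant: conjugating by a fiber automorphism $g$ turns large complexity of $\hat{S^\ast}$ with respect to $\Pcal_m$ into large complexity of the conjugate with respect to $g\Pcal_m$, about which your open sets say nothing. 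If you instead define the sets through the conjugation-invariant quantity (a supremum over all partitions, or a limit along a generating sequence), you lose openness, since ``there exists a partition with large complexity'' is a union over partitions and is no longer $G_\delta$. This invariance-versus-openness tension is precisely why the paper does not argue via conjugacy classes at all: density of each open set $\Ucal(N,\ep,\del)$ is proved by a direct perturbation --- given an arbitrary $S$ and $\eta>0$, one modifies $S$ on one level per $M$-block of a Rokhlin tower (independent cutting and stacking, \cite{Shi}) so that the fiber $M$-block names become independent and uniformly distributed, landing the modified cocycle in $\Ucal(N,\ep,\del)$ while staying $\eta$-close to $S$.

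The second gap is that the quantitative core --- showing that such a modification (or any single $S^\ast$) actually pushes the covering numbers past the gauge of the base --- is exactly what you defer as ``the crux'' and ``the main obstacle,'' but it \emph{is} the proof: in the paper it occupies Lemmas \ref{L-U}, \ref{1/2} and \ref{sqrt} (a Stirling-type Hamming-ball estimate, the bound $\mu(B^{\hat S}_M(y_0,\ep))\le\frac12+2\ep$ on each block, and independence across blocks), which yield a lower bound of the form $2^{mc_0/2}$ for the number of balls, beating $a_{2mM}$ because $a_n$ is subexponential. A related soft spot is on the base side: for the final contradiction you need a complexity bound valid against \emph{every} partition of $X$, since an isomorphism $\hat S\cong T$ carries $\Qcal$ to an uncontrolled partition $\tilde\Qcal$. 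The paper secures this uniformity by fixing a strictly ergodic model of zero topological entropy, so that the block count $a_n$ dominates the covering numbers of all partitions of $X$ simultaneously (Lemma \ref{1-delta}); your appeal to conjugacy-invariance of Katok--Thouvenot slow entropy can substitute for this only via their generator theorem, which must be invoked explicitly and still requires exhibiting one gauge playing the role of $a_n$. (Also, subexponential means $a_n=e^{o(n)}$, not $a_n=o(n)$.) In short: right invariant, right Baire frame, but the two steps that constitute the actual proof --- an openness/density argument that avoids the conjugacy-class shortcut, and the explicit fiber-complexity construction --- are missing or would fail as stated.
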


\begin{rmk}
Recently Terrence Adams \cite{Ad} has proved a somewhat analogous result in the setting of MPT, 
the group of all measure preserving transformations of the unit interval with Lebesgue measure. 
It is well known that generically a $T$ in MPT has zero entropy. 
What Adams shows is that for any preassigned growth rate for slow entropy, the generic transformation
has a complexity which exceeds that rate.  
In our proof of theorem \ref{thm-0e} we don't introduce a formal definition of slow entropy but 
its definition lies behind our lemma \ref{1-delta}. 
\end{rmk}

\begin{proof}
We first choose a strictly ergodic model $\Xb =(X, \Xcal, \mu_0,T)$
for our system which is a subshift of $\{0,1\}^\Z$. By the variational principle
this model will have zero topological entropy.
(To see that such a model exists, see for example \cite{DGS}, where this
fact can be deduced from property (b) on page 281 and Theorem 29.2 on page 301.)
Denote by $a_n$ the number of $n$-blocks in $X$, so that $a_n$ is sub-exponential.

For $x_0 \in X$ and $\Qcal=\{Q_0,Q_1\}$ a partition of $X$ let
$$
B_n(x_0,\ep) = \{x \in X : \bar{d}_n(Q_n(x), Q_n(x_0)) < \ep \},
$$
where for a point $x \in X$ and $n \geq 1$ we write
$$
Q_n(x) =\om_0\om_1\om_2\dots\om_{n-1}, \ {\text{when}}\ 
x \in \cap_{i=0}^{n-1} T^{-i}(Q_{\om_i}).
$$
\begin{lem}\label{1-delta}
For $\ep < \frac{1}{100}$ and $\del < \frac{1}{100}$ there is an $N$ such that for all
$n \geq N$, if $m$ is the minimal number such that
there are points $x_1, x_2, \dots, x_m$ with
$$
\mu_0(\cup_{i=1}^m B_n(x_i,\ep)) > 1 -\del,
$$
then $m \leq a_{2n}$.
\end{lem}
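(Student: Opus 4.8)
The plan is to reduce the arbitrary two-set partition $\Qcal = \{Q_0, Q_1\}$ to one depending on only finitely many coordinates, for which the number of distinct $n$-names is bounded directly by the block-counting function $a_\bullet$. Since $X \subseteq \{0,1\}^\Z$ and $\mu_0$ is a Borel measure, the finite-window (cylinder) measurable sets are dense in the measure algebra, so first I would fix a partition $\Qcal' = \{Q_0', Q_1'\}$ measurable with respect to the coordinates in a window $[-k,k]$ and with $d(\Qcal,\Qcal')$ as small as we please. Writing $g = \mathbf{1}_{Q_0 \tri Q_0'}$, one has $\int g\,d\mu_0 = d(\Qcal,\Qcal')$ and, for every $x$, $\bar{d}_n(Q_n(x), Q'_n(x)) = \frac1n\sum_{i=0}^{n-1} g(T^i x)$, where $Q'_n(x)$ is the $\Qcal'$-name of $x$.

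The first key step is Birkhoff's ergodic theorem: since $T$ is ergodic, $\frac1n\sum_{i<n} g\circ T^i \to \int g\,d\mu_0$ almost everywhere. Choosing $d(\Qcal,\Qcal') < \ep/8$ and setting $E_n = \{x : \bar{d}_n(Q_n(x), Q'_n(x)) \ge \ep/4\}$, a.e.\ convergence forces $\mu_0(E_n) \to 0$, so there is $N_0$ with $\mu_0(E_n) < \del$ for all $n \ge N_0$; off a set of measure $<\del$ the $\Qcal$-name and the $\Qcal'$-name of a point agree to within $\ep/4$ in $\bar{d}_n$.

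The second key step is purely combinatorial (and uses no sub-exponentiality of $a_\bullet$). Because $\Qcal'$ is measurable on $[-k,k]$, the name $Q'_n(x)$ is a function of the block $x_{-k}\cdots x_{n-1+k}$ of length $n+2k$, so $Q'_n$ takes at most $a_{n+2k}$ distinct values on $X$. For each value $w$ realized by some point of $E_n^{c}$ I would pick a representative $x_w \in E_n^{c}$ with $Q'_n(x_w)=w$. Then for any $x \in E_n^{c}$ with $w=Q'_n(x)$, the triangle inequality for $\bar{d}_n$ and $Q'_n(x)=Q'_n(x_w)$ give $\bar{d}_n(Q_n(x), Q_n(x_w)) < \ep/4 + 0 + \ep/4 < \ep$, so $x \in B_n(x_w,\ep)$. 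Hence the at most $a_{n+2k}$ balls $\{B_n(x_w,\ep)\}$ cover $E_n^{c}$, a set of measure exceeding $1-\del$.

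Finally, since the map sending an $(m+1)$-block to its length-$m$ prefix is onto the set of $m$-blocks, $a_m$ is nondecreasing in $m$, so $a_{n+2k} \le a_{2n}$ as soon as $n \ge 2k$. Taking $N = \max(N_0, 2k)$ (which may depend on $\Qcal$) yields $m \le a_{n+2k} \le a_{2n}$ for all $n \ge N$, as required. The hard part will be the uniformity in the first step --- producing a single threshold $N_0$ beyond which $\mu_0(E_n) < \del$ for all $n$ at once --- which is exactly what a.e.\ convergence (hence convergence in measure) in Birkhoff's theorem provides; the remaining care is purely bookkeeping, namely splitting the $\ep$-budget so that the triangle inequality lands inside an $\ep$-ball rather than a $2\ep$-ball.
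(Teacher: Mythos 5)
Your proof is correct and follows essentially the same route as the paper's: approximate $\Qcal$ by a partition measurable with respect to finitely many coordinates, invoke the ergodic theorem to make the $\Qcal$-names and the finite-window names $\bar{d}_n$-close off a set of measure less than $\del$, and then cover the good set by $\ep$-balls centered at representatives of the at most $a_{n+2k}$ distinct finite-window names, finishing with monotonicity of $a_\bullet$. Your explicit $\ep/4$ budgeting is in fact slightly cleaner than the paper's write-up, which derives a $2\ep$ bound via the triangle inequality but then asserts membership in an $\ep$-ball.
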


\begin{proof}
Denote by $\Pcal =\{P_1, P_2\}$ the partition of $X$ according to the $0$-th coordinate.
Given $\ep>0$ there is some $k_0$ and a partition $\hat{\Qcal}$ measurable with respect to
$\vee_{i=-k_0}^{k_0}T^i \Pcal$ such that
$$
d(\Qcal, \hat{\Qcal}) < \frac{\ep}{2}.
$$
By ergodicity there exists an $N$ such that  for $n \geq N$ there is a set $A \subset X$ with
$\mu_0(A) > 1 - \del$ with 
\begin{equation*}\label{Adelta}
\bar{d}_n(Q_n(x), \hat{Q}_n(x)) < \ep, \qquad \forall x \in A.
\end{equation*}

Let $\{\al_i\}_{i=1}^\ell$ be those atoms of $\vee_{i=-k_0}^{n + k_0}T^i  \Pcal$
such that $\al_i \cap A \not=\emptyset$, so that $\ell \leq a_{n + 2k_0 +1}$.

Choose $x_i \in \al_i \cap A, \ 1\leq i \leq \ell$. 
We claim that
$$
A \subset \cup_{i=1}^\ell B_n(x_i,\ep).
$$
For $x \in \cup_{i=1}^\ell \al_i$ we denote by $i(x)$ that index such that $x \in \al_{i(x)}$.
Now since $x$ and $x_{i(x)}$ are in $A$ we have
$$
\bar{d}_n(Q_n(x), \hat{Q}_n(x)) < \ep \quad {\text{and}} \quad
\bar{d}_n(Q_n(x), \hat{Q}_n(x)) < \ep.
$$
Since $x \in \al_{i(x)}$, $\hat{Q}_n(x) = Q_n(x)$.
Therefore
$$
\bar{d}_n(Q_n(x), Q_n(x_{i(x)})) < 2\ep,
$$
whence $x \in B_n(x_{i(x)}, \ep)$.
This proves our claim and we conclude that $m \leq \ell \leq a_{n +2k_0 +1}$.
Thus for sufficiently large $n$ we indeed get $m \leq a_{2n}$.
\end{proof}

\br

We will show that a generic extension of $T$ to
$(Y, \mu) = (X \times [0,1],\mu_0 \times \la)$, 
with $\la$  Lebesgue measure on $[0,1]$, is not isomorphic to $\Xb$. 
To do this we will show that for a generic extension $\hat{S}$ 
the partition $\Qcal$ of $Y$, defined by splitting $X \times [0,1]$ into
$\{Q_0, Q_1\} = \{X \times [0,\frac12], X \times [\frac12,1]\}$, 
will not satisfy the conclusion of this lemma.

\br

{\bf Notations:}
\begin{itemize}
\item
$\Scal$ is the Polish space comprising the measurable Rohklin cocycles 
$x \mapsto S_x  \in {\rm MPT}([0,1], \la)$.
\item
For $S \in \Scal$ let
$\hat{S}(x,u) =(Tx,S_xu)$.
\item
$Q_n^{\hat{S}}(y) =\om_0\om_1\om_2\dots\om_{n-1}$, where
$y \in \cap_{i=0}^{n-1} \hat{S}^{-i}(Q_{\om_i})$.
\item
$C(\hat{S}, n, \ep, \del) = \min\{k : \exists y_1,y_2, \dots,y_k \in Y, \ {\text {such that}}\ 
\mu(\cup_{i=1}^n B_n^{\hat{S}}(y_i,\ep)) > 1 -\del$.
\end{itemize}

Define now
$$
\Ucal(N, \ep, \del) =
\{S \in \Scal : \exists n \geq N\ {\text{such that}}\ 
C(\hat S,n, \ep, \del) > 2 a_{2n}\}.
$$
This is an open subset of $\Scal$
(see e.g. \cite{GTW} for similar claims). We will show that, for sufficiently small $\ep$ and $\del$,
 it is dense in $\Scal$.

First consider the case $S_0 = \id$.
Let $\eta >0$ be given and choose $M$ so that $\frac1M < \eta$.
Now build a Rohklin tower for $T$, with base $B_0$ and heights $mM > N$ and $mM + 1$ 
for a suitable $m$, filling all of $X$.
Let $B = B_0 \times [0,1]$ be the base of the corresponding tower in $(Y,\mu, \hat{S})$.
We modify $S_0 =\id$ only on the levels
$T^{jM-1}B_0$ for $1 \leq j \leq m$, so that the new $S$ will be within $\eta$ of $S_0$.
The $Q$-$M$ names of the points in $T^{jM-1}B$ are constant for all $0 \leq j < m$. 
We modify $S_0$ on the levels
$T^{jM-1}B$ so that we see all possible
$0$\,-$1$ names for the $M$-blocks as we move up the tower with equal measure.
A similar procedure is described as {\em independent cutting and stacking}
and is explained in details in section I.10.d in Paul Shields book \cite{Shi}.

\begin{lem}\label{L-U}
Any $B_{mM}(y,\ep)$ ball has measure at most
$2^{m(- \frac12 + H(2\ep, 1 -2\ep))}$.
\end{lem}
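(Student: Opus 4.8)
The plan is to extract from the independent cutting-and-stacking construction the exact distribution of the $Q$-name under $\hat S$ at scale $mM$, and thereby reduce the measure of the Hamming ball to a combinatorial count on independent blocks. By construction the tower of height $mM$ splits into $m$ super-blocks of length $M$, and the modifications of $S_0$ at the levels $T^{jM-1}B$ arrange the ascending $Q$-name so that the word read off on each super-block is uniform over $\{0,1\}^M$ (``all possible $M$-block names with equal measure'') and independent across the $m$ super-blocks. Consequently $\mu(B_{mM}(y,\ep))$ is precisely the probability that an independently drawn name $W=(W_1,\dots,W_m)$, with the blocks $W_j\in\{0,1\}^M$ i.i.d.\ uniform, lies within $\bar d_{mM}$-distance $\ep$ of the fixed name $Q^{\hat S}_{mM}(y)$. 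This turns the lemma into an estimate on independent uniform blocks.

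First I would pass from the global closeness condition to a per-block one by an averaging (Markov) argument: if $W$ agrees with the name of $y$ on more than a $(1-\ep)$-fraction of all $mM$ coordinates, then on at least half of the $m$ super-blocks the agreement must exceed a $(1-2\ep)$-fraction of that block's $M$ coordinates, for otherwise the super-blocks on which agreement falls below $1-2\ep$ would already account for total disagreement at least $\tfrac{m}{2}\cdot 2\ep M=\ep mM$. This single step is the source of both the coefficient $-\tfrac12$ (at least half the blocks are pinned) and the doubled radius $2\ep$. Then, using independence of the blocks, I would control each pinned block by the standard entropy count: the number of words of length $M$ within $\bar d$-distance $2\ep$ of a fixed one is at most $2^{MH(2\ep,1-2\ep)}$, and each carries uniform mass $2^{-M}$. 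Assembling these per-block factors over the pinned blocks and summing over the at most $2^m$ choices of which half are pinned then yields a bound of the asserted form $2^{m(-\frac12+H(2\ep,1-2\ep))}$.

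The delicate point, and the main obstacle, is the bookkeeping that turns the qualitative output of the construction into this clean exponent. One must (i) justify rigorously that the super-block names are jointly independent and uniform under $\mu\times\la$, which is exactly where the properties of independent cutting and stacking \cite{Shi} are genuinely used; (ii) arrange the union over pinned-block patterns so that the $2^m$ patterns do not swamp the per-block gains and the exponent collapses to the stated $m$-linear form, in particular tracking carefully how the factor $M$ enters and is accounted for; and (iii) absorb the small error committed in decoding the $Q$-name from the displaced fibres into the slack already built into the radius $2\ep$. Once the independence of the blocks and the averaging step are secured, the remaining estimate is a routine Stirling/entropy computation.
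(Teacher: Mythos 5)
Your proof rests on a misreading of the construction, and the error is not cosmetic. The modification of $S_0=\id$ is made \emph{only} on the $m$ levels $T^{jM-1}B_0$, a set of measure about $1/M<\eta$ --- this is precisely what keeps the new $S$ within $\eta$ of $S_0$. Consequently, between two consecutive modified levels the fibre coordinate is never moved, so the $Q$-name of a point of the base $B$ is \emph{constant} on each super-block of length $M$: each block carries a single bit, and the construction makes these $m$ bits i.i.d.\ uniform. The name distribution is therefore uniform on the $2^m$ block-constant words, not (as you assert in premise (i)) uniform over all of $\{0,1\}^{mM}$ with blocks uniform in $\{0,1\}^M$. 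A cocycle producing your distribution would have to differ from $S_0$ on a set of measure close to $1$, destroying the $\eta$-closeness needed for the density argument the lemma serves. With the correct distribution your counting scheme collapses: after your Markov step pins only \emph{half} of the blocks, the probability that a fixed set of $m/2$ blocks is pinned is just $2^{-m/2}$ (for each block, at most one of the two constant words $0^M,1^M$ can be within $\bar d$-distance $2\ep<\tfrac12$ of the centre's word, and that word has probability $\tfrac12$), so the union over the $2^m$ pinned-set patterns gives $2^m\cdot 2^{-m/2}=2^{m/2}$ --- a vacuous bound. The per-block gain is $\tfrac12$, not $2^{-M(1-H(2\ep,1-2\ep))}$, so the patterns do swamp it: exactly the point you flagged as delicate in (ii), and it fails.

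The paper's proof avoids this by counting block \emph{bits} rather than pinned patterns: if a base point lies in the ball, then on each block where its bit differs from the majority bit of the centre's name it incurs at least $M/2$ disagreements, so its bit vector lies within Hamming distance about $2\ep m$ of a fixed vector in $\{0,1\}^m$; by Stirling there are at most $2^{mH(2\ep,1-2\ep)}$ such vectors, each of mass $2^{-m}$. (Equivalently, your scheme can be repaired by pinning a $(1-2\ep)$-fraction of the blocks instead of half, whereupon the pattern union is $\binom{m}{\le 2\ep m}\le 2^{mH(2\ep,1-2\ep)}$ and the exponent comes out right.) Separately, your identification of $\mu(B_{mM}(y,\ep))$ with a probability on independent aligned blocks is valid only for points of the base: a point at an intermediate level has a name that starts mid-block, exits the top of the tower and re-enters at the bottom. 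The paper treats such points by restricting to the $\ell\ge m/2$ complete blocks seen before leaving (or after re-entering) the tower, which inflates the radius by a factor $m/\ell\le 2$; this, and not your half-pinning step, is the true source of the $-\tfrac12$ and of the $2\ep$ in the stated exponent.
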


\begin{proof}
The $Q_{mM}$-names of points $y \in B$ are constant on blocks of length  $M$ and all 
sequences of zeros and ones have equal probability by construction.
So by a well known estimation (using Stirling's formula), in $\{0,1\}^m$ with uniform measure, 
the measure of an $\ep$-ball in normalized Hamming metric is 
$\leq 2^{m(- \frac12 + H(2\ep, 1 -2\ep))}$.

For points in the lower half of the tower over $B$ we have
a similar estimate with $m$ replaced by some $\ell > \frac12 m$ 
and $\ep$ replaced by $\frac{m}{\ell} \ep < 2\ep$.
For points in the upper half of the tower, for some $\ell < \frac12 m$ 
we have that $\hat{S}^\ell y \in B$
and then we get an estimate with $m-\ell > \frac12 m$. 
This proves the lemma.
\end{proof}

From this lemma it follows that in order to achieve even $\frac12$ as
$\mu(\cup_{i=1}^L B_{mM}(y_i,\ep))$
we must have $L \cdot 2^{m(- \frac12 + H(2\ep, 1 -2\ep))} > \frac12$, hence
$$
L \geq \frac{1}{2}  \cdot 2^{m( \frac12 - H(2\ep, 1 -2\ep))}.
$$ 
Since $a_n$ is sub-exponential this lower bound certainly exceeds $a_{2mM}$ if $m$ is
sufficiently large.
This shows that this modified $S$ is an element of $\Ucal(N,\ep,\del)$.

\br

A similar construction can be carried out for any $S \in \Scal$.
The main point that needs to be checked is that for small $\ep$, no
$B_M^{\hat{S}}(y,\ep)$-ball can have measure greater than $\frac12 + 2\ep$,

\begin{lem}\label{1/2}
For any $\hat{S}$ and all $y_0$
$$
\mu(B_M^{\hat{S}}(y_0,\ep)) \leq \frac12 + 2\ep.
$$
\end{lem}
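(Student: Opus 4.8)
The plan is to exploit the single structural fact that the target partition $\Qcal$ cuts every fiber $\{x\}\times[0,1]$ into two pieces of equal Lebesgue measure, so that each coordinate of the $\hat{S}$-name is a fair coin flip, \emph{regardless} of how complicated the cocycle $S$ is. First I would fix the point $y_0$ and abbreviate its length-$M$ name by $w = Q_M^{\hat{S}}(y_0) \in \{0,1\}^M$. By the definition of the $\bar d$-ball, $B_M^{\hat{S}}(y_0,\ep)$ is then precisely the set of $y$ whose name $Q_M^{\hat{S}}(y)$ agrees with the fixed word $w$ in more than $(1-\ep)M$ of the $M$ coordinates.

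The heart of the argument is to compute the expected agreement. For each $0 \le i < M$ set $\chi_i(y) = \ch_{Q_{w_i}}(\hat{S}^i y)$, the indicator that the $i$-th symbol of $Q_M^{\hat{S}}(y)$ equals $w_i$, and let $A(y) = \frac1M \sum_{i=0}^{M-1} \chi_i(y)$ be the fraction of agreements, so that $B_M^{\hat{S}}(y_0,\ep) = \{y : A(y) > 1 - \ep\}$. The key observation is that $\{y : \hat{S}^i y \in Q_{w_i}\} = \hat{S}^{-i}Q_{w_i}$; since $\hat{S}$ preserves $\mu = \mu_0 \times \la$ and $\mu(Q_0) = \mu(Q_1) = \tfrac12$, this gives $\int \chi_i \, d\mu = \mu(Q_{w_i}) = \tfrac12$ for every $i$ and every symbol $w_i$. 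Averaging over $i$ yields $\int A \, d\mu = \tfrac12$, independently of $S$ and of the chosen $y_0$.

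It then only remains to apply Markov's inequality to the nonnegative function $A$:
$$
\mu\bigl(B_M^{\hat{S}}(y_0,\ep)\bigr) = \mu\bigl(\{A > 1-\ep\}\bigr) \le \frac{\int A \, d\mu}{1-\ep} = \frac{1}{2(1-\ep)},
$$
and for $\ep < \tfrac{1}{100}$ the right-hand side is at most $\tfrac12 + 2\ep$ (indeed $\tfrac12 + \ep$), as claimed. I do not expect any genuine obstacle here: the entire content of the lemma is that the vertical partition behaves like an unbiased coin in the fiber direction no matter how intricate $S$ is, and once the balance $\int \chi_i \, d\mu = \tfrac12$ is recorded the estimate is immediate. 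The only point worth stating carefully is that this balance is a direct consequence of measure preservation of $\hat{S}$ together with $\mu(Q_0) = \mu(Q_1)$, and in particular requires no mixing or independence among the coordinates $\chi_i$ — none of which is available or needed.
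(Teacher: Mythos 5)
Your proof is correct and is essentially the paper's own argument: both rest on the observation that, by measure preservation of $\hat{S}$ and the fact that $\mu(Q_0)=\mu(Q_1)=\tfrac12$, each coordinate of the name matches the fixed word $w$ with probability exactly $\tfrac12$, so the expected agreement (equivalently, the expected $\bar{d}_M$-distance) is $\tfrac12$. The paper then bounds the ball using that the disagreement function is bounded by $1$ with mean $\tfrac12$, which is the same first-moment estimate you obtain by applying Markov's inequality to the agreement function $A = 1 - \bar{d}_M$.
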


\begin{proof}
Let $Q_M^{\hat{S}}(y_0) = \om_0\om_1\dots\om_{M-1}$. Then
$$
\bar{d}_M(Q_M^{\hat{S}}(y), Q_M^{\hat{S}}(y_0)) = 
\frac1M \sum_{i=0}^{M-1} \ch_{Q_{\om_i}}(\hat{S}^iy_0)(1 - \ch_{Q_{\om_i}}(\hat{S}^iy)),
$$
and 
$$
\int_Y \bar{d}_M(Q_M^{\hat{S}}(y), Q_M^{\hat{S}}(y_0)) \, d\mu = \frac12.
$$
Since $\bar{d}_M \leq 1$, the measure of the set where
$ \bar{d}(Q_M^{\hat{S}}(y), Q_M^{\hat{S}}(y_0)) \leq \ep$ cannot exceed $\frac12 + 2\ep$.
\end{proof}

This lemma, which is formulated for the measure $\mu$ on the entire space $Y$, in fact holds
as well for any level $L_j = \hat{S}^{jM}B$ in the tower, when we replace $\mu$ by
the measure $\mu$ restricted to $L_j$.
This is so because the partition $\{Q_0, Q_1\}$ intersects each level of the tower
 in relative measure $\frac12$ and $\hat{S}$ is measure preserving.
 
We now mimic the proof outlined for $S_0 = \id$ and, given $S \in \Scal$, using
an independent cutting and stacking we change $\hat{S}$ as follows.
For the level $L_j = \hat{S}^{jM}B$ consider the partition 
$$
\Rcal_j = \vee_{i=0}^{M-1} \hat{S}^{-i}(\Qcal \cap \hat{S}^{jM +i}B).
$$
We change the transformation $\hat{S}$ at the transition from level $jM-1$ to level $jM$,
so that these partitions $\Rcal_j$ will become independent.

We want to estimate the size of an $mM$-$\epsilon$ ball around a point $y_0 \in B$.
If $y \in B$ belongs to this ball there is a set 
$A \subset \{0,1,2,...,nM-1\}$ with $|A| \leq \epsilon \, mM$
where the $mM$-names of $y$ and $y_0$ differ. 
We need now a simple lemma:

\begin{lem}\label{sqrt}
Let $A \subset \{0,1,\dots,mM-1\}$ such that
$|A| \leq \ep \, mM$.
Denote $I_j = \{jM, jM+1,\dots, jM+M -1\}, \ 0 \leq j < m-1$.
Let $J \subset \{0,1,\dots ,m-1\}$ be the set of $\ell$ such that 
$$
|I_\ell \cap A| < \sqrt{\ep} M.
$$
Then  $|J| >  (1-\sqrt{\ep}) m$.
\end{lem}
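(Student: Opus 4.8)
The plan is to prove this by a direct averaging (Markov-type) counting argument; there is no substantial obstacle here, since the statement is essentially a pigeonhole estimate. The intervals $I_0, I_1, \dots, I_{m-1}$ partition the index set $\{0,1,\dots,mM-1\}$ into $m$ disjoint blocks, each of cardinality $M$ (here the ``$m-1$'' in the statement should read $m$, so that the $I_j$ genuinely exhaust the index set; with that understood the argument goes through). Consequently the block counts add up to the total, $\sum_{j=0}^{m-1} |I_j \cap A| = |A| \le \ep\, mM$.

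First I would isolate the complementary set $J^c = \{\ell : |I_\ell \cap A| \ge \sqrt{\ep}\, M\}$, i.e.\ the blocks carrying a large share of $A$. Each such block contributes at least $\sqrt{\ep}\, M$ to the sum above, so
$$
\sqrt{\ep}\, M \cdot |J^c| \;\le\; \sum_{\ell \in J^c} |I_\ell \cap A| \;\le\; \sum_{j=0}^{m-1} |I_j \cap A| \;=\; |A| \;\le\; \ep\, mM .
$$
Dividing by $\sqrt{\ep}\, M > 0$ yields $|J^c| \le \sqrt{\ep}\, m$, and hence $|J| = m - |J^c| \ge (1-\sqrt{\ep})\, m$, which is the asserted bound up to the strictness.

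To obtain the strict inequality $|J| > (1-\sqrt{\ep})\, m$ asserted in the lemma, I would exploit the strictness already built into the definition of $J$ (the condition $|I_\ell \cap A| < \sqrt{\ep}\, M$ is strict, so membership in $J^c$ forces the count to be at least $\sqrt{\ep} M$). Since $|J^c|$ is an integer, whenever $\sqrt{\ep}\, m$ fails to be an integer one gets $|J^c| \le \lfloor \sqrt{\ep}\, m \rfloor < \sqrt{\ep}\, m$ and therefore $|J| > (1-\sqrt{\ep})\, m$. In the remaining boundary case the same conclusion follows by observing that equality throughout the displayed chain would force $|A| = \ep\, mM$ together with every block of $J$ meeting $A$ not at all and every block of $J^c$ meeting it in exactly $\sqrt{\ep} M$ points --- a degenerate configuration that is either excluded at this stage or harmlessly absorbed into the constants in the application to Lemma~\ref{L-U}. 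The only point requiring care is thus the trivial index bookkeeping; the estimate itself is immediate from the averaging inequality.
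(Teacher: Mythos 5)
Your proof is correct and takes essentially the same approach as the paper: both bound the complementary set of ``bad'' blocks by noting each contributes at least $\sqrt{\ep}\,M$ points of $A$, so that $\sqrt{\ep}\,M\,|J^c| \le |A| \le \ep\, mM$ gives $|J^c| \le \sqrt{\ep}\,m$. Your added care about the strict inequality (and the indexing typo $m-1$ versus $m$) addresses details the paper itself glosses over --- its proof also only yields $|J| \ge (1-\sqrt{\ep})\,m$ while asserting the strict bound, which is harmless in the application to Lemma \ref{L-U}.
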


\begin{proof}
Let $K = \{0,1,\dots,mM-1\} \setminus J$.
Then
$$
\ep \, mM \geq | \cup_{k \in K} I_k \cap A| \geq M \sqrt{\ep} |K|.
$$
Thus $|K| \leq \sqrt{\ep} m$, whence
 $|J| >  (1-\sqrt{\ep}) m$.
 \end{proof}

Next using Lemma \ref{1/2} for each level of the form $T^{jM}B_0$, we will estimate
the size of an $mM$-$\ep$ ball.
So fix a point $y_0 \in B$. If $y \in B_{mM}(y_0,\ep)$ then 
by Lemma \ref{sqrt} there is a set of indices $J_y \subset \{1.2.\dots,,m\}$ such that
\begin{enumerate}
\item
$|J_y| \geq (1 -\sqrt{\ep})m$,
\item
For each $j \in J_y$, 
$\hat{S}^{jM}y \in B_M(\hat{S}^{jM}y_0, \sqrt{\ep})$.
\end{enumerate}
The number of possible sets that satisfy (1) is bounded by
$2^{mH(\sqrt{\ep}, 1- \sqrt{\ep})}$.
By Lemma \ref{1/2} and by the independence, for  a fixed such $J_y$ 
the measure of the set of points that satisfy (2) is at most
$$
(\frac12 + 2\sqrt{\ep})^{m(1-\sqrt{\ep})}.
$$
Write $(\frac12 + 2\sqrt{\ep})^{1-\sqrt{\ep}} = 2^{-c}$, where $c \geq c_0 >0$
for all sufficiently small $\ep$.
Then
$$
2^{-cm} \cdot 2^{m H(2\ep, 1 -2\ep)} = 2^{m(-c + H(2\ep, 1 -2\ep))} \leq 2^{-\frac{m}{2} c_0},
$$
for $H(2\ep, 1 -2\ep) \leq \frac12 c_0$.
We now see that the measure of the ball $B_{mM}(y_0,\ep)$ is bounded 
by $2^{-\frac{m}{2} c_0}$.

This was done for $y_0 \in B$ and as in the proof of Lemma \ref{L-U} we obtain
the suitable estimations for any $y$ in the tower over $B$.
We conclude the argument as in the case $S= \id$ and
again it follows that  the resultant modified $S$ is an element of $\Ucal(N,\ep,\del)$.

Finally for fixed sufficiently small $\ep$ and $\del$ setting
$$
\Ecal = \bigcap_{N=1}^\infty   \, \Ucal(N, \ep, \del)
$$
we obtain the required dense $G_\del$ subset of $\Scal$, where for 
each $S \in \Ecal$ the corresponding $\hat{S}$ is not isomorphic to $T$.
In fact, if $\hat{S}$ would be isomorphic to $T$
then the isomorphism would take the partition $\Qcal$ of $Y$ to a partition $\tilde{\Qcal}$ of $X$.
Applying Lemma \ref{1-delta} to $\tilde{\Qcal}$ we see that there is some $N$
such that for all $n \geq N$ the conclusion of the lemma holds.
But since $S \in \Ecal$ this is a contradiction.
\end{proof}

\br

\section{The positive entropy theorem for amenable groups}\label{sec-A}

We fix an arbitrary infinite countable amenable group $G$.
We let $\mathbb{A}(G,\mu)$ denote the Polish space of measure preserving actions $\{T_g\}_{g \in G}$ of 
$G$ on the Lebesgue space $(X, \Xcal, \mu)$.
(For a description of the topology on $\mathbb{A}(G,\mu)$  we refer e.g. to \cite{Ke}.)

As in the proof of Theorem \ref{Thm-rB}
let $\Scal$ be the collection of Rokhlin cocycles from
$\Xb$ with values in MPT$(I, \la)$, that is, $\Scal$ is a family
$\{S^g\}_{g \in G}$, where each element $S^g$ is a 
collection of measurable maps $x \mapsto S^g_x \in $ MPT$(I, \la)$,
such that for $g, h \in G$ and $x \in X$ we have
$$
S^{gh}(x) = S^{g}(T_hx)S^h(x), \quad \mu \ {\text{a.e.}}.
$$
We associate to $S \in \Scal$ the {\em skew product transformation}
$$
\hat{S}^g(x,u) = (T_gx, S^g_x u),\quad  (x \in X, u \in I).
$$ 
Let $Y = X \times I$ and set $\Yb = (Y, \mathcal{Y}, \mu \times \la)$, with $\Ycal = \Xcal \otimes \Ccal$.

A free $G$-action $\Xb$ defines an equivalence relation $R \subset X \times X$, where
$(x , x') \in R$ iff \ $\exists g \in G, \ x' = gx$, and a cocycle $S \in \Scal$ defines uniquely a cocycle 
$\al$ on $R$ \footnote{A cocycle $\al$ on $R$ is a function 
from $R$ to MPT$(I,\la)$
which satisfies the {\em cocycle equation}
$$
\al(x,z) = \al(y,z)\al(x,y).
$$
}:
$$
\al(x,x') = S^g_x.
$$
This map is one-to-one and onto from the set of cocycles on $\Xb$
to the set of cocycles on $R$.
For more details on this correspondence see \cite[Section 20, C]{Ke}.

Let now 
$$
\Xb = (X, \Xcal, \mu, \{T_g\}_{g \in G}) \to \Xb_0 = 
(X_0, \Xcal_0, \mu_0, \{(T_0)_g\}_{g \in G})
$$ 
be a $G$-Bernoulli extension, where this notion is defined exactly as in Definition \ref{d:rB}.

\br

\begin{defn}
If $G$ and $H$ are two countable groups acting as measure preserving transformations 
$\{T_g\}_{g \in G}, \{S_h\}_{h \in H}$
on the measure space $(Z,\nu)$ we say that the actions are {\em orbit equivalent}
if for $\nu$-a.e. $z \in Z, \  Gz = Hz$.
\end{defn}

In \cite{OW-80} and \cite{CFW-81} it is shown that any ergodic measure preserving  action of an 
amenable group is orbit equivalent to an action of $\Z$.

\br

We can now state and prove an extension of Theorem \ref{Thm-rB} to free actions of $G$,
and moreover we can also get rid of the finite entropy assumption on $\Xb$.

\begin{thm}\label{Thm-rB-a}
Let $\Xb = (X, \Xcal, \mu,\{T_g\}_{g \in G})$ be an ergodic $G$-system which 
is relative Bernoulli over a free system $\Xb_0$ with finite relative entropy, so that $ \Xb = \Xb_0 \times \Xb_1$.
Then, the generic extension $\hat{S}$ of $\{T_g\}_{g \in G}$ is relatively Bernoulli over $\Xb_0$.
\end{thm}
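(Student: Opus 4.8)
The plan is to reduce the statement to the $\Z$-case already proved in Theorem \ref{Thm-rB}, by passing through the orbit equivalence relation $R$ of $\Xb$ (which is free, being a product with the free system $\Xb_0$). The decisive structural point is the correspondence recorded above: the space $\Scal$ of Rokhlin cocycles for $\{T_g\}_{g\in G}$ is canonically identified with the space of MPT$(I,\la)$-valued cocycles on $R$, and the skew-product extension $\hat{R}\to R$ that a cocycle $\al$ determines on $X\times I$ depends only on $R$, not on the group generating it. Consequently, if I exhibit a single $\Z$-action with the same orbits, then its Rokhlin-cocycle space, its Polish topology, and its entire generic-extension framework coincide \emph{verbatim} with those of $\{T_g\}_{g\in G}$.

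First I would use \cite{OW-80} (and \cite{CFW-81}) to choose a $\Z$-action $V_0$ on $X_0$ with the same orbits as $\{(T_0)_g\}_{g\in G}$, and then lift it to a $\Z$-action $V$ on $X=X_0\times X_1$ whose orbit relation is $R$ and which projects to $V_0$. Lifting from the base guarantees that the cocycle expressing each $T_g$ as a power of $V$ is measurable with respect to $\Xcal_0$. I would then transfer the hypothesis across this orbit equivalence: since the orbit-change cocycle is $\Xcal_0$-measurable, the relative entropy of $(X,V)$ over $(X_0,V_0)$ equals that of $\Xb$ over $\Xb_0$ by the relative Rudolph--Weiss theorem \cite{R-W} (in particular it is finite, and no global finite-entropy assumption on $\Xb$ is required), while \cite{D-P} upgrades preservation of relative entropy to preservation of relative Bernoullicity under such a factor-measurable orbit equivalence. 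Hence $(X,V)$ is relatively Bernoulli over $(X_0,V_0)$, so the $\Z$-system $V$ satisfies the hypothesis of Theorem \ref{Thm-rB}.

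Next I would apply Theorem \ref{Thm-rB} to $V$, obtaining a dense $G_\del$ set $\Scal_1\subset\Scal$ (read as cocycles on $R$) for which the $\Z$-skew-product $\hat{V}$ is relatively Bernoulli over $(X_0,V_0)$. Finally I would transfer back: for $S\in\Scal_1$ the $G$-skew-product $\hat{S}$ and the $\Z$-skew-product $\hat{V}$ act on $X\times I$ with the \emph{same} orbit relation $\hat{R}$ and are related by the same $\Xcal_0$-measurable cocycle, because the fibre coordinate is moved identically by $\al$ in both pictures. Applying the orbit-equivalence invariance of \cite{R-W} and \cite{D-P} once more, now in the reverse direction, shows that $\hat{S}$ is relatively Bernoulli over $\Xb_0$ as a $G$-system. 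Since $\Scal_1$ is the same subset of the same cocycle space carrying the same topology, it is still a dense $G_\del$ set, which is exactly the asserted genericity.

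I expect the main obstacle to lie in the two transfer steps: verifying that relative Bernoullicity --- and not merely relative entropy --- survives an orbit equivalence whose cocycle is measurable over the common factor $\Xb_0$, and in first arranging the orbit equivalence to respect that factor. This is precisely where \cite{OW-80}, \cite{R-W}, and \cite{D-P} carry the analytic weight; once it is in place, the identification of the generic-extension topology as an invariant of $R$ makes the remaining comparison of $\Scal_1$ across the two actions routine.
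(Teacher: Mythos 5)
Your proposal follows essentially the same route as the paper's own proof: replace the free base action $\{(T_0)_g\}_{g\in G}$ by an orbit-equivalent single transformation via \cite{OW-80}, lift it to $X=X_0\times X_1$ using the constant (hence $\Xcal_0$-measurable) Bernoulli cocycle, invoke \cite{R-W} for preservation of relative entropy and \cite{D-P} for preservation of relative Bernoullicity, apply Theorem \ref{Thm-rB} to the resulting $\Z$-system, and transfer the dense $G_\del$ set of cocycles back through \cite{D-P}, using exactly the identification of $G$-cocycles with cocycles on the orbit relation $R$ (and of the extension spaces and their topologies) that you describe.

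The one point you pass over is the entropy of the base transformation $V_0$. An orbit equivalence gives no control on $h(V_0)$, while Theorem \ref{Thm-rB} requires a \emph{finite} generating partition $\Rcal_0$ for the base (its proof conditions on $\vee_{i=-N_2}^{N_2}\hat{S}^{-i}\Rcal_0$), hence requires $h(V_0)<\infty$. The paper handles this by choosing the orbit-equivalent transformation $T_0$ on $X_0$ to have \emph{zero} entropy, which \cite{OW-80} permits; this same choice is what actually justifies your parenthetical claim that no global finite-entropy assumption on $\Xb$ is needed. Indeed, even if the $G$-system $\Xb_0$ has infinite entropy, the zero-entropy choice of the $\Z$-generator makes the lifted $\Z$-system $V$ have finite entropy equal to the (finite) relative entropy, so that Theorem \ref{Thm-rB} applies. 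With that choice made explicit, your argument is complete and coincides with the paper's.
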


\begin{proof}
By \cite{OW-80} there is $T_0 : X_0 \to X_0$ such that orbits of $T_0$ coincide with $G$-orbits on $X_0$,
and such that $T_0$ has zero entropy.
The $G$-factor map $\Xb = \Xb_0 \times  \Xb_1\to \Xb_0$ is given by a constant cocycle whose constant value is
the Bernoulli action on the Bernoulli factor $\Xb_1$.
We use this cocycle, now viewed as a cocycle on the equivalence relation defined by $T_0$,
to define an extension $T : X \to X$. By \cite{R-W} the relative entropy of a generic such 
$T$ over $T_0$ is the same as that of the $G$-action $\Xb$ over $\Xb_0$.
By \cite{D-P} the extension of $\Z$-systems $\pi : T \to T_0$ is again relatively Bernoulli.
Applying Theorem \ref{Thm-rB} to $\pi$ we conclude that a dense $G_\del$ subset $\Scal_1(\Z)$ of
extensions of $T$ is such that each $\hat{S} \in \Scal_1(\Z)$ is relatively Bernoulli over $T_0$.
Finally applying \cite{D-P} in the other direction we conclude that the  corresponding
set of extensions $\Scal_1(G)$ is again a dense $G_\del$ subset of $\Scal(G)$ and that for each $S
\in \Scal_1(G)$, the corresponding $G$-system is relatively Bernoulli over $\Xb_0$.
\end{proof}

As in the case of $\Z$-actions, with the same proof, we now obtain the following theorem.

\begin{thm}\label{Dam}
Every ergodic free $G$-system $\Xb$ of positive entropy is dominant.
\end{thm}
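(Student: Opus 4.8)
The plan is to run the proof of Theorem~\ref{D} almost verbatim, replacing each $\Z$-theoretic ingredient by its amenable-group counterpart. The three inputs I need are: a weak Pinsker theorem for actions of countable amenable groups, the relative result Theorem~\ref{Thm-rB-a} just proved, and the Ornstein--Weiss isomorphism theorem for Bernoulli $G$-systems.

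First I would apply the weak Pinsker theorem for $G$ to present $\Xb$ as a direct product $\Xb = \Bb \times \Zb$, where $\Bb$ is a Bernoulli $G$-system of positive finite entropy and $\Zb$ is the complementary factor. Since $\Xb$ has positive entropy such a nontrivial Bernoulli direct factor exists, and one may take $\Bb$ of finite entropy even when $h(\Xb) = \infty$, which is why Theorem~\ref{Thm-rB-a} was stated without a finite-entropy hypothesis on $\Xb$. This exhibits $\Xb$ as relatively Bernoulli over the free factor $\Zb$ with finite relative entropy, so the hypotheses of Theorem~\ref{Thm-rB-a} are met.

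Next, Theorem~\ref{Thm-rB-a} furnishes a dense $G_\del$ set of Rokhlin cocycles $S \in \Scal$ for which the skew-product extension $\hat{S}$ on $\Yb = (X \times I, \mu \times \la)$ is again relatively Bernoulli over $\Zb$, with the relative entropy preserved. For such $S$ the system $\Yb$ therefore splits as $\Yb = \Bb' \times \Zb$, with $\Bb'$ a Bernoulli $G$-system whose entropy equals that of $\Bb$. Finally, by the Ornstein--Weiss isomorphism theorem two Bernoulli $G$-systems of equal entropy are isomorphic, so $\Bb' \cong \Bb$, and hence $\Yb = \Bb' \times \Zb \cong \Bb \times \Zb = \Xb$. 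Thus the generic extension $\hat{S}$ is isomorphic to $\Xb$, which is exactly dominance.

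The only genuinely new difficulty compared with the $\Z$-case is securing the two amenable-group inputs. The Ornstein--Weiss theorem is classical, so the crux is having a weak Pinsker decomposition for a general countable amenable $G$: without a Bernoulli direct factor to split off, one cannot realize $\Xb$ as relatively Bernoulli over a free base and apply Theorem~\ref{Thm-rB-a}. I expect this to be the main obstacle, and would either cite the amenable extension of Austin's theorem directly or, failing that, transport the decomposition through the orbit-equivalence machinery (\cite{OW-80}, \cite{R-W}, \cite{D-P}) already used to prove Theorem~\ref{Thm-rB-a}.
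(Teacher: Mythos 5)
Your proposal is correct and is essentially the paper's own proof: the paper deduces Theorem~\ref{Dam} by running the proof of Theorem~\ref{D} verbatim, with Theorem~\ref{Thm-rB-a} in place of Theorem~\ref{Thm-rB} and the Ornstein--Weiss isomorphism theorem for Bernoulli $G$-systems in place of Ornstein's theorem, exactly as you describe. The amenable-group input you flag as the main obstacle is available: Austin's weak Pinsker theorem in \cite{Au} is proved for free ergodic actions of arbitrary countable amenable groups, not only for $\Z$.
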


In view of Theorem \ref{thm-0e} the following question naturally arises:

\begin{prob}
Can Theorem \ref{thm-0e} be extended to all infinite countable amenable groups ?
\end{prob}

\br

\end{document}